\documentclass[reqno, 11pt]{amsart}
\usepackage{amsmath, amsthm, amsfonts, amssymb}
\usepackage{color,xcolor}

\newtheorem{theorem}{Theorem}[section]
\newtheorem{lemma}[theorem]{Lemma}
\newtheorem{question}[theorem]{Question}

\theoremstyle{definition}
\newtheorem{definition}[theorem]{Definition}

\newtheorem{example}[theorem]{Example}

\newtheorem{remark}[theorem]{Remark}
\numberwithin{equation}{section}

 \numberwithin{equation}{section}

\newcommand{\C}{\mathbb{C}}
\newcommand{\D}{\mathbb{D}}

\newcommand{\F}{\mathcal{F}}

\newcommand{\Cn}{\mathbb{C}^{n}}

 \allowdisplaybreaks
 
\begin{document}

\author[Pratiksha]{PRATIKSHA}
\address{
\begin{tabular}{lll}
& Pratiksha \\
&Department of Mathematics\\
&University of Jammu\\
&Jammu-180006\\ 
&India\\
\end{tabular}}
\email{pratikshapakhetra1999@gmail.com}

\title[On normality and $\varphi$-normality...]{On normality and $\varphi$-normality of holomorphic functions in several complex variables}

\begin{abstract}

In this paper, we investigate $\varphi$-normal functions and normal families of holomorphic functions concerning total derivatives in $\mathbb{C}^{n}.$ More precisely, we prove a sufficient condition for a holomorphic function defined on an open unit ball in $\Cn$ satisfying certain conditions involving higher order partial derivatives to be $\varphi$-normal. Furthermore, by using differential inequalities involving total differential polynomials in $\Cn,$ we establish some normality criteria for holomorphic functions in $\Cn$ which generalize some known results.

\end{abstract}

\maketitle

\section{Introduction and main results}
\renewcommand{\thefootnote}{\fnsymbol{footnote}}
\footnotetext{2020 {\it Mathematics Subject Classification}. Primary 32A18, 32A19; Secondary 32A22.}
\footnotetext{{\it Keywords and phrases}. Normal families, normal functions, total derivatives, holomorphic functions.}
Let $\Omega$ be a domain in $\C.$ A family of  meromorphic functions defined
in $\Omega \subset \C$ is said to be \emph{normal in $\Omega$} if every sequence in the
family admits a subsequence that converges spherically uniformly in compact subsets of $\Omega$ either to a
meromorphic function or to a function identically equal to infinity. The theory of normal families occupies a central position in complex analysis. Its origins can be traced back to the pioneering work of Montel \cite{MON} in the early twentieth century. Since then, this theory has developed into a powerful analytical framework with far reaching applications in complex analysis, dynamics of rational and transcendental maps, bicomplex analysis, and several other branches of modern mathematics (see, for example, \cite{ARB, NIK-1, CHA, RAH3, PAN}). Recently, methods of normal families have seen a lot a applications in harmonic mappings (see \cite{NIK, BOH, DENG}) which, in turn, have been employed to study fluid flow problems (see \cite{AL}).

\medskip

A simultaneous development in this theory is the concept of \emph{normal functions}, introduced implicitly by Yosida \cite{YOS} and Noshiro \cite{NOS} and later formalized by Lehto and Virtanen \cite{LEH}. A meromorphic function defined on the open unit disk $\D$ is called normal if its post composition with conformal automorphisms of the unit disk forms a normal family. This notion admits an equivalent characterization in terms of the boundedness of the spherical derivative as
\begin{definition}
A meromorphic function $f$ defined in the open unit disk $\mathbb{D}\subset \C$ is called normal in $\mathbb{D}$ if
 \begin{equation}\label{eq:n1}
     \sup_{\zeta \in \mathbb{D}}(1-|\zeta|^2)f^{\#}(\zeta) < \infty,
 \end{equation}
 where $f^{\#}(\zeta)$ denotes the spherical derivative of $f(\zeta).$
 \end{definition}
The condition \eqref{eq:n1} above means that $f$ is Lipschitz when regarded as a function from the hyperbolic disk $\D$ into the extended complex plane endowed with the chordal distance (see \cite{LEH}).
 Motivated by the desire to enlarge the class of normal functions, Aulaskari and
R\"atty\"a \cite{AUL}, introduced the notion of $\varphi$-normal functions, replacing
the classical growth condition involving the hyperbolic metric with a more
flexible scale governed by a smoothly increasing function $\varphi$. This
extension has since attracted considerable attention and has led to a wealth of new results, especially concerning refined normality criteria and boundary
behaviour. For more detailed view of theory of normal families and normal functions in $\C$, the reader is advised to go through \cite{SCH}.\\

Despite the maturity of the theory in one complex variable, the extension of these ideas to the setting of several complex variables presents substantial
challenges. The intricate geometry of domains in $\C^n$, renders many classical techniques ineffective or incomplete. Consequently, the development
of theory of normal families in $\C^n$ has become an active and vibrant area of research. The present paper, therefore, is devoted to provide more richness into the theory of normal families in $\Cn.$\\

Let $U$ be a domain in $\C^n$. Throughout this paper, by a \emph{holomorphic function} in $U$, we mean a $\C$-valued holomorphic function defined in $U$.
Analogous to the one-variable case, a family of holomorphic functions defined in $U \subset \C^n$ is said to be \emph{normal} in $U$ if every sequence in the family admits a subsequence that converges locally uniformly in $U$ either to a holomorphic function or to the function identically equal to infinity. To facilitate our results, we shall be using the following notations:
\begin{align*} 
 z &= (z_{1},~z_{2},~\ldots,~z_{n}) \in \Cn, z_{j} \in \C,~~~~~~~ j= 1, 2,\ldots, n,\\
0 &= (0,~0,~\ldots,~0) \in \Cn,\\
\|z-w\| &= \sqrt{\sum\limits_{j=1}^{n}|z_{j}-w_{j}|^{2},}\\
\Delta(w,\delta)&=\{z\in \Cn: \|z-w\|<\delta\},~ \delta>0,\\
\mathbb{B} &= \{z\in \Cn: \|z\|<1\}.
\end{align*}\\
A fundamental tool in the study of normality is the notion of the spherical derivative, whose extension to several complex variables was formulated via the complex Hessian as;
Let $ \omega = (\omega_1,~ \omega_2,~\ldots, ~\omega_n) \in \Cn$, and for every function $h \in \mathcal{C}^2(U)$, at each $z \in U$, define a Hermitian form
\begin{equation}\label{1.4}
L_{z}(h, \omega) = \sum\limits_{j,~ k=1}^{n}\frac{\partial^2h(z)}{\partial z_j\partial\overline{z}_k}\omega_j\overline{\omega}_k,
\end{equation}
called the Hessian of $h$ at $z$. For a holomorphic function $h$ in $U$, the spherical derivative is defined by
\begin{equation}\label{1.5}
h^{\#}(z) = \sup_{\|\omega\|=1}\sqrt{L_{z}(log(1+|h|^2), \omega)}.
\end{equation}
 This definition coincides with the classical spherical derivative when $n=1$ and admits the equivalent representation
$$h^{\#}(z) = \frac{|h'(z)|}{1+|h(z)|^2}.$$
as shown in \cite{DOV}.
Further, by straightforward calculations, one can obtain
\begin{equation}\label{spherical2}
h^{\#}(z)=\sup_{\|\omega\|=1}
\frac{|\langle\nabla h(z),\overline{\omega}\rangle|}
{1+|h(z)|^2},
\end{equation}
 where $\nabla h(z)$ denotes the complex gradient of $h$, and $\langle z,w \rangle$,  with $z, w \in \Cn$ is the standard inner product in $\Cn$.\\\\
Employing this notion of spherical derivative, Zhu et al. \cite{JCA} extended the concept of smoothly increasing functions and the notion of $\varphi$-normality to $\C^n$ as 
\begin{definition}\label{1.6}
A function $\varphi : [0,1) \longrightarrow (0,\infty)$ is said to be \emph{smoothly increasing} if
\begin{equation}
\varphi(r)(1-r) \longrightarrow \infty \quad \text{as } r \longrightarrow 1^{-}, 
\end{equation}
and
\begin{equation}
\mathcal{R}_a(w):=
\frac{\varphi\!\left(\left\|a+{w}/{\varphi(\|a\|)}\right\|\right)}{\varphi(\|a\|)}
\longrightarrow 1 \quad \text{as }~ \|a\| \longrightarrow 1^{-},
\end{equation}
uniformly in compact subsets of $\mathbb{C}^n$.  
\end{definition}
Given such a function $\varphi$, a holomorphic function $h$ in $\mathbb{B}$ is said to be
\emph{$\varphi$-normal} if
\begin{equation}
\sup_{w\in\mathbb{B}}\frac{ h^{\#}(w)}{\varphi(\|w\|)} < \infty, 
\end{equation}
where $h^{\#}(w)$ is given by \eqref{spherical2}.\\\\
Also, recall that for multi-index $J = (j_1,~ j_2,~\ldots,~j_n) \in \mathbb{N}^n,~ |J| = \sum\limits_{k=1}^{n}j_k,$ at each~ $w \in \Cn,$ the partial derivative operator , $D^J$
is given by
\begin{equation}\label{1.7}
    D^J = \frac{\partial^{|J|}}{\partial w_1^{j_1}\partial w_2^{j_2}\ldots\partial w_n^{j_n}}.
\end{equation}\\
Using Definition \ref{1.6} and the operator defined in \eqref{1.7}, Zhu et al.~\cite{JCA} obtained sufficient conditions for $\varphi$-normality involving higher-order partial derivatives. More precisely, they proved the following result:
\begin{theorem}\label{1.08}
  Let $k$ be a positive integer and $f$
be a holomorphic function in $\mathbb{B}\subset \Cn$ such that
\begin{equation*}
\sup\{ |D^{I}f(z)|: f(z) = 0,~ I \in \mathbb{N}^n,~ |I| = 1,~2,~\ldots,~ k-1\} < \infty.
\end{equation*}
If there exists a set $E$ consisting of three distinct finite complex numbers such that
\begin{equation*}
\sup_{z\in f^{-1}(E)}\frac{1}{\varphi(\|z\|)^{k}}\frac{|D^{J}f(z)|}{1 +|f(z)|^{k+1}} < \infty,~ J \in \mathbb{N}^{n},~ |J|= k,
\end{equation*}
then $f$ is $\varphi$-normal.
\end{theorem}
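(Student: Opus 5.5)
Theorem \ref{1.08} is Zhu et al.'s result, and I would prove it by contradiction, using a several-variable Zalcman--Pang rescaling lemma adapted to the scale $\varphi$, in the form used in \cite{JCA}. Suppose $f$ is not $\varphi$-normal. Then there are points $w_m\in\mathbb{B}$ with $f^{\#}(w_m)/\varphi(\|w_m\|)\to\infty$. Because $f^{\#}$ is bounded on compact subsets of $\mathbb{B}$, we must have $\|w_m\|\to 1$.

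\textbf{Rescaling.} I would look for points $a_m$ with $\|a_m\|\to1$, radii $\rho_m$ with $\rho_m\varphi(\|a_m\|)\to 0$, and a unit vector direction, such that
\[
g_m(\zeta)=\rho_m^{-\alpha}f\bigl(a_m+\rho_m\zeta\bigr),\qquad \alpha\in[0,k),
\]
converges locally uniformly on $\C^n$ (or on $\C$ after restricting to a complex line) to a nonconstant entire function $g$. The limit should satisfy $g^{\#}\le g^{\#}(0)=1$ and have order at most one. This comes from a maximality selection of the $a_m$, as in the Pang--Zalcman lemma. The first hypothesis, boundedness of $D^If$ on $f^{-1}(0)$ for $|I|\le k-1$, is exactly what permits choosing $\alpha$ with $0<\alpha<k$. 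Since the hypothesis involves $E$ rather than zeros only, however, I would instead take $\alpha=0$ and use the plain Zalcman rescaling, with the condition $\mathcal{R}_a(w)\to 1$ guaranteeing $\rho_m\varphi(\|a_m\|)\to c$ bounded. The first hypothesis would then be used only to control the derivatives in the limit.

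\textbf{Transfer of the derivative condition.} With $\alpha=0$,
\[
D^Jg_m(\zeta)=\rho_m^{k}\,D^Jf(a_m+\rho_m\zeta).
\]
Let $\zeta_0$ satisfy $g(\zeta_0)=e\in E$. Since $g$ is nonconstant, Hurwitz's theorem gives $\zeta_m\to\zeta_0$ with $g_m(\zeta_m)=e$, so that $f(a_m+\rho_m\zeta_m)\in E$. The second hypothesis then yields
\[
|D^Jg_m(\zeta_m)|\le C\,\rho_m^{k}\,\varphi\bigl(\|a_m+\rho_m\zeta_m\|\bigr)^{k}\,(1+|e|^{k+1}).
\]
Using $\mathcal{R}_{a_m}\to1$ uniformly on compacta, we can replace $\varphi(\|a_m+\rho_m\zeta_m\|)$ by $\varphi(\|a_m\|)$ asymptotically. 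If the rescaling is arranged so that $\rho_m\varphi(\|a_m\|)\to 0$ (the genuine Zalcman normalization, where $\rho_m\sim 1/f^{\#}(a_m)$ and $f^{\#}(a_m)/\varphi(\|a_m\|)\to\infty$), the right side tends to $0$. Hence $D^Jg(\zeta_0)=0$ for every $|J|=k$, at every point of $g^{-1}(E)$.

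\textbf{Contradiction, and the main obstacle.} Restrict $g$ to complex lines through points of $g^{-1}(E)$ along which $g$ is nonconstant. Then $h(t)=g(\zeta_0+tv)$ is a nonconstant entire function of one variable, of order at most one, with bounded spherical derivative. Every $E$-point of $h$ has multiplicity at least $k+1$ when $k\ge1$, since all partial derivatives of order $k$ vanish. For $k\ge2$ the lower-order derivatives on $E$-points are not controlled, so this route fails. The cleaner approach is Pang--Zalcman with $\alpha=k$ adapted: one uses $\alpha$ with $-1<\alpha<k$, via the first hypothesis, and rescales with $\alpha=k$ in the derivative. This is the technical obstacle: checking that the several-variable Pang--Zalcman lemma of \cite{JCA} holds for $\varphi$-scales, which is where $\mathcal{R}_a\to1$ enters. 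Once the contradiction step is reached, a nonconstant entire $h$ with $h^{(k)}=0$ on $h^{-1}(E)$ for three values of $E$ contradicts a Nevanlinna second main theorem count. Since $h$ is entire, $\overline N(r,\infty)=0$, and with the multiplicity or derivative vanishing, $\sum_{e\in E}\overline N(r,1/(h-e))\le \tfrac{2}{3}T(r,h)+S(r,h)$ or similar, forcing $h$ to be a polynomial of degree less than $k$. That in turn contradicts $h^{\#}(0)=1$ with order bound, or forces $h$ constant. Hence $f$ is $\varphi$-normal.
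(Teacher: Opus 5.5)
There is a genuine gap at the end of your argument. Granting your $\alpha=0$ rescaling, your transfer step is correct: it gives a nonconstant entire $g$ on $\C^n$ with $D^Jg=0$ for all $|J|=k$ on $g^{-1}(E)$. Hence, for any nonconstant restriction $h(t)=g(p+tv)$, every $E$-point of $h$ is a zero of $h^{(k)}$. If $h^{(k)}\not\equiv0$, the second main theorem (three finite values, $h$ entire) gives
\[
2T(r,h)\le\sum_{e\in E}\overline N\Bigl(r,\frac{1}{h-e}\Bigr)+S(r,h)\le\overline N\Bigl(r,\frac{1}{h^{(k)}}\Bigr)+S(r,h)\le T(r,h)+S(r,h),
\]
which is a contradiction. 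Your ``$\tfrac23T(r,h)$ or similar'' bound is neither justified nor needed.

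So everything hinges on ruling out $h^{(k)}\equiv0$, i.e.\ $g$ being a nonconstant polynomial of degree $<k$; in that case the $E$-hypothesis is vacuous. Your claim that this ``contradicts $h^{\#}(0)=1$ with order bound'' is false. For example, $h(t)=t$ has order $0$ and $h^{\#}\le h^{\#}(0)=1$, yet $h^{(k)}\equiv0$ for $k\ge2$.

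The missing idea is to use the first hypothesis, which you announce but never apply. If $g(\zeta_0)=0$, Hurwitz gives $\zeta_m\to\zeta_0$ with $g_m(\zeta_m)=0$. Then $|D^Ig_m(\zeta_m)|=\rho_m^{|I|}|D^If(a_m+\rho_m\zeta_m)|\le C\rho_m^{|I|}\to0$, so $D^Ig(\zeta_0)=0$ for $1\le|I|\le k-1$. Thus every zero of every nonconstant restriction $h$ has multiplicity at least $k$. A nonconstant polynomial of degree $<k$ cannot have this property, so $h^{(k)}\not\equiv0$ and the display above finishes the proof. The paper does not reprove this statement (it is quoted from \cite{JCA}), but this is exactly Claim 1, ending in ``$D^JH\not\equiv0$'', in its proof of Theorem \ref{T1}, which follows the same scheme.

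Your ``main obstacle'' is misdiagnosed, and the proposed remedy would break the proof. Multiplicity at $E$-points is never needed; only the vanishing of $h^{(k)}$ there is used, and your claim of multiplicity $\ge k+1$ is false in any case. Switching to a Pang--Zalcman rescaling with $\alpha>0$ destroys the transfer step, and $\alpha=k$ is not admissible at all. The reason is that $\rho_m^{-\alpha}f(a_m+\rho_m\zeta_m)=e$ means $f=\rho_m^{\alpha}e$, not $f\in E$, so the second hypothesis cannot be invoked.

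Stay with $\alpha=0$. Rather than a hand-made maximality selection (where your statements about $\rho_m\varphi(\|a_m\|)$ are also inconsistent), take the rescaling from Lemma \ref{A1} followed by Lemma \ref{Z1} with $p=0$. This yields $f\bigl(a_t+\varphi(\|a_t\|)^{-1}(w_t+\rho_tw)\bigr)\to H$ with $\rho_t\to0$ and $w_t+\rho_tw$ ranging in a compact set. Then $\mathcal{R}_{a_t}\to1$ gives $\varphi(\|a_t+\varphi(\|a_t\|)^{-1}(w_t+\rho_tw)\|)/\varphi(\|a_t\|)\to1$ uniformly on compacta, which is exactly what your transfer estimate needs.
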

\medskip

\noindent This result naturally raises the question of whether the cardinality of the set $E$ is optimal. In this paper, we address this question affirmatively by demonstrating that the set $E$ can indeed be reduced from three points to two, provided additional control is imposed on higher-order derivatives. This constitutes our first main result and significantly sharpens Theorem \ref{1.08} as
\begin{theorem}\label{T1}
Let $k$ be a positive integer and $f$
be a holomorphic function in $\mathbb{B}\subset \Cn$ such that
\begin{equation}\label{2.1}
\sup\{ |D^{I}f(z)|: f(z) = 0,~ I \in \mathbb{N}^n,~ |I| = 1,~2,~\ldots,~ k-1\} < \infty.
\end{equation}
If there exists a set $E$ consisting of two distinct finite complex numbers such that
\begin{equation}\label{2.2}
\sup_{z\in f^{-1}(E)}\frac{1}{\varphi(\|z\|)^{k}}\frac{|D^{J}f(z)|}{1 +|f(z)|^{k+1}} < \infty,~ J \in \mathbb{N}^{n},~ |J|= k,
\end{equation}
and
\begin{equation}\label{2.3}
\sup_{z\in f^{-1}(E)}\frac{|D^{L}f(z)|}{1 +|D^{J}f(z)|^\frac{k+1}{k}} < \infty,~ L \in \mathbb{N}^{n}, ~|L|= k+1,
\end{equation}
then $f$ is $\varphi$-normal.
\end{theorem}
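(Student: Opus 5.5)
We argue by contradiction, using a several-variable Zalcman rescaling lemma adapted to $\varphi$, as in the proof of Theorem \ref{1.08} by Zhu et al.~\cite{JCA}. Suppose $f$ is not $\varphi$-normal. Then there are points $a_m\in\mathbb{B}$ with $f^{\#}(a_m)/\varphi(\|a_m\|)\to\infty$, and necessarily $\|a_m\|\to 1^-$. Condition \eqref{2.1} bounds the derivatives of order $1,\dots,k-1$ at the zeros of $f$. This permits the rescaling exponent $-1<\alpha<k$ in the Zalcman--Pang type lemma, and we take $\alpha=k$ in the limiting sense used by Zhu et al. We obtain points $z_m\in\mathbb{B}$, positive numbers $\rho_m\to0$ with $\rho_m\varphi(\|z_m\|)\to 0$, and unit vectors, such that
\[
g_m(\zeta)=\rho_m^{-k}f\bigl(z_m+\rho_m\zeta\bigr),\qquad \zeta\in\C \text{ (restricted to a complex line through } z_m),
\]
converges locally uniformly, in the spherical metric, to a nonconstant entire function $g$ of one variable. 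The limit satisfies $g^{\#}(\zeta)\le g^{\#}(0)=1$ and has order at most one. The smoothly increasing hypothesis on $\varphi$, namely $\mathcal{R}_a\to1$, makes the rescaled domains exhaust $\C$ and keeps $\varphi(\|z_m+\rho_m\zeta\|)/\varphi(\|z_m\|)$ bounded on compacts. If the Pang--Zalcman version with exponent $k$ is not directly available in $\Cn$, we would instead rescale with exponent $0$ and work with the quantities $D^Jf/(1+|f|^{k+1})$ directly. That is where the weight $1+|f|^{k+1}$ and the exponent $\frac{k+1}{k}$ in \eqref{2.3} come from.

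Write $E=\{e_1,e_2\}$. \textbf{Step 1} shows that $g^{(k)}$ is bounded on $g^{-1}(E)$, and in fact vanishes there. If $g(\zeta_0)=e_j$, Hurwitz gives $\zeta_m\to\zeta_0$ with $f(z_m+\rho_m\zeta_m)=e_j$. The $k$-th directional derivative of $g_m$ is a combination of $D^Jf$ with $|J|=k$ and bounded coefficients, so $g_m^{(k)}(\zeta_m)$ is bounded by a constant times $\rho_m^{0}\,\varphi(\|z_m+\rho_m\zeta_m\|)^k\rho_m^{k}\cdot$(bounded), by \eqref{2.2}. This tends to $0$, using $\mathcal{R}_a\to 1$ and $\rho_m\varphi\to0$. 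Hence $g^{(k)}=0$ on $g^{-1}(E)$. \textbf{Step 2} turns \eqref{2.3} into control of $g^{(k+1)}$ on $g^{-1}(E)$. Since $|D^Jf|\le C\varphi^k$ there, \eqref{2.3} gives $|D^Lf|\le C(1+\varphi^{k+1})$. After scaling by $\rho_m^{k+1}$, this shows $g^{(k+1)}=0$ on $g^{-1}(E)$. Consequently every $e_j$-point of $g$ is a zero of $g-e_j$ of multiplicity at least $k+2$, except in degenerate low-order cases, which we handle separately when $k=1$.

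\textbf{Step 3} rules out such a $g$. A nonconstant entire $g$ whose $e_1$- and $e_2$-points all have multiplicity at least $k+2\ge3$ cannot exist. By Nevanlinna's second main theorem for entire functions with two finite values, $T(r,g)\le \bigl(\tfrac1{k+2}+\tfrac1{k+2}\bigr)T(r,g)+S(r,g)$, which is a contradiction since $\frac{2}{k+2}<1$. The same conclusion follows from the totally ramified values count: an entire function has at most two totally ramified finite values, and then only with multiplicity $2$, as $\cos$ shows. Here the extra condition \eqref{2.3} is used to push the multiplicity beyond $2$. Without \eqref{2.3}, we would only get multiplicity $\ge k+1$, which for $k=1$ is $2$ and is not contradictory. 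This is exactly why the two-point version requires \eqref{2.3}. A special case arises if $g$ omits some $e_j$. Then $g$ has order $\le1$ and omits one value, so $g=e_j+e^{a\zeta+b}$, and its $e_{j'}$-points are simple, which contradicts Step 2.

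The main obstacle is Step 2. Transferring \eqref{2.3} to the limit requires that the right normalization of $D^Lf$ via $|D^Jf|^{(k+1)/k}$ produce exactly the factor $\rho_m^{k+1}\varphi^{k+1}\to0$. One must also check that directional derivatives along the rescaling line are controlled by the full set of partials $D^J$, $D^L$. First I would bound $|\partial_\zeta^{k+1}g_m|\le C\sum_{|L|=k+1}|D^Lf|\,\rho_m^{k+1-k}\rho_m^{k}$ and insert \eqref{2.2}--\eqref{2.3}. If the exponent bookkeeping fails, I would rescale with $\alpha=0$ and use the spherical-derivative formulation throughout.
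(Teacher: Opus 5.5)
\textbf{There is a genuine gap, located in the last sentence of Step 2 and hence in all of Step 3.}

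With the exponent-$0$ rescaling, Steps 1 and 2 correctly give $g^{(k)}(\zeta_0)=g^{(k+1)}(\zeta_0)=0$ at every $\zeta_0\in g^{-1}(E)$. Exponent $0$ is what your Step 1 computation actually uses, and it is what the paper obtains by combining Lemma \ref{A1} with Lemma \ref{Z1}. These two vanishing statements are exactly the paper's Claims 2 and 3. However, they do not make $\zeta_0$ a zero of $g-e_j$ of multiplicity $k+2$. That would also require $g'(\zeta_0)=\dots=g^{(k-1)}(\zeta_0)=0$, and nothing in \eqref{2.2}--\eqref{2.3} controls derivatives of order below $k$ at $E$-points. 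For $k\ge 2$, $g'(\zeta_0)\neq 0$ is perfectly compatible with $g^{(k)}(\zeta_0)=g^{(k+1)}(\zeta_0)=0$. Only for $k=1$ is your multiplicity-$3$ conclusion correct, so you have the degenerate case backwards. Consequently the count $T\le \frac{2}{k+2}T+S$, the totally-ramified-values argument, and your remark about multiplicity $k+1$ without \eqref{2.3} all fail for every $k\ge2$.

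\textbf{The fix is to count $E$-points as zeros of $g^{(k)}$ rather than as multiple $e_j$-points of $g$.} Each point of $g^{-1}(E)$ is a zero of $g^{(k)}$ of multiplicity at least $2$, and $g^{-1}(e_1)$, $g^{-1}(e_2)$ are disjoint. So, provided $g^{(k)}\not\equiv 0$,
\[
T(r,g)\le \overline{N}\Bigl(r,\frac{1}{g-e_1}\Bigr)+\overline{N}\Bigl(r,\frac{1}{g-e_2}\Bigr)+S(r,g)\le \frac12 N\Bigl(r,\frac{1}{g^{(k)}}\Bigr)+S(r,g)\le \frac12 T(r,g)+S(r,g).
\]
The last step uses $T(r,g^{(k)})\le T(r,g)+S(r,g)$ for entire $g$; this chain is the paper's final step.

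\textbf{You also need \eqref{2.1}, which your proposal never genuinely uses.} It does not license a Pang--Zalcman exponent $k$, because it does not force the zeros of $f$ to have multiplicity $\ge k$. Moreover, with the factor $\rho_m^{-k}$, Hurwitz would produce points where $f=\rho_m^k e_j$, not points of $f^{-1}(E)$. So commit to exponent $0$.

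The correct role of \eqref{2.1} is the paper's Claim 1. Apply Hurwitz at zeros of $g$ to get $g'=\dots=g^{(k-1)}=0$ there, so every zero of $g$ has multiplicity $\ge k$. This rules out $g$ being a nonconstant polynomial of degree $<k$, which is the only way to have $g^{(k)}\equiv0$. In that case your Step 1--2 conclusions hold vacuously: take $g(\zeta)=\zeta$ with $k=2$. With these two corrections, your omitted-value special case is no longer needed.
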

\noindent Theorem \ref{T1} for plannar harmonic mappings is due to Bharti and Thin \cite{NIK}. The next objective of this paper is to study the theory of normal families involving total derivatives in several complex variables, introduced by Lu \cite{JIN} as
\begin{definition}
Let $z = (z_{1},~z_{2},~\ldots,~z_{n}) \in \Cn$, and $f$ be an entire function in $\Cn.$ The total derivative $D_*f$ of $f$ is defined by $D_*f(z) := 
\sum\limits_{j=1}^{n}z_{j}f_{z_{j}}(z),$ where $f_{z_j}$ is the partial derivative of $f$ with respect to $z_{j},~ j =1,~2,~\ldots,~n.$ The l-th order total derivative $D_*^{l}f(z)$ is defined recursively by
$$D_*^{l}f(z):= D_*(D_*^{l-1}f)(z),~ l\geq 1 ~\mbox{with}~ D_*^{0}f(z) := f(z).$$
    
\end{definition}

\noindent
\begin{remark}
It is worth mentioning that the total derivative of a transcendental entire function is again a transcendental entire function. In contrast, this property need not hold for partial derivatives.
\end{remark}
Building upon this idea, Cao and Liu \cite{LIU} established analogues of Marty’s theorem, Miranda’s theorem, and Zalcman’s lemma in the setting of
total derivatives. Subsequently, in $2019$, they obtained several normality criteria involving bounds on higher order total derivatives (see~\cite{CAO}). More precisely, in \cite[Theorem 1.8, p. 5]{CAO}, they proved
\begin{theorem}\label{1.9}
     Let $k \geq 1$ be a natural number and $\mathcal{F}$ be a family of holomorphic functions in a domain $U \subset \mathbb{C}^{n}$. Suppose that each $f \in \mathcal{F}$  is non-vanishing in $\Delta(0,\delta) \subset U$, where $\delta \in (0,1)$. If for three distinct finite complex numbers $c_1,~c_2,~c_3$ and for all $f \in \mathcal{F}, f(z) = c_i \implies|D_*^{k}f(z)| \leq C,$ for some $C > 0, ~\text{and}~ i = 1,~ 2,~ 3$, then $\mathcal{F}$ is normal in $U$.   
    \end{theorem}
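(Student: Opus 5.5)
The plan is to argue by contradiction, using the total-derivative Zalcman lemma of Cao and Liu~\cite{LIU} away from the origin and a separate minimum-principle argument at the origin. Normality is a local property, so it suffices to show that $\F$ is normal at every $z_0\in U$. The two cases behave differently: $D_*$ degenerates at $0$, since every total derivative of every function vanishes at the origin. This is why the hypothesis that each $f$ is non-vanishing on $\Delta(0,\delta)$ is needed.

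\emph{Case $z_0\neq 0$.} Suppose $\F$ is not normal at $z_0$. By the Zalcman-type lemma of \cite{LIU}, there are $f_j\in\F$, points $z_j\to z_0$ and radii $\rho_j\to0^+$ such that
\begin{equation*}
g_j(\xi)=f_j(z_j+\rho_j\xi)
\end{equation*}
converges locally uniformly on $\Cn$ to a nonconstant entire function $g$ with bounded spherical derivative. I then expand $D_*^k f_j(z_j+\rho_j\xi)$ by the chain rule. Since $D_*^k=\sum_{|\alpha|=k} z^\alpha\partial^\alpha+(\text{lower order terms})$ and $\partial^\alpha f_j(z_j+\rho_j\xi)=\rho_j^{-|\alpha|}\partial^\alpha g_j(\xi)$, this gives
\begin{equation*}
\rho_j^k\,D_*^kf_j(z_j+\rho_j\xi)\longrightarrow \sum_{|\alpha|=k}z_0^{\alpha}\,\partial^\alpha g(\xi)=\frac{d^k}{dt^k}\Big|_{t=0}\,g(\xi+tz_0),
\end{equation*}
locally uniformly. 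Now let $g(\xi_0)=c_i$. If $g$ is not identically $c_i$, Hurwitz's theorem (applied on complex lines through $\xi_0$) gives $\xi_j\to\xi_0$ with $g_j(\xi_j)=c_i$. Hence $|D_*^kf_j(z_j+\rho_j\xi_j)|\le C$, and multiplying by $\rho_j^k\to0$ shows that the $k$-th derivative of $g$ in the direction $z_0$ vanishes at $\xi_0$.

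Next I restrict to complex lines. For each $\xi$, put $h_\xi(t)=g(\xi+tz_0)$, an entire function of one variable. Then $h_\xi=c_i$ implies $h_\xi^{(k)}=0$ for $i=1,2,3$. I would invoke the one-variable result that such an entire function satisfies $h^{(k)}\equiv0$, by a Nevanlinna/Milloux argument comparing $\overline N(r,1/(h-c_i))$ with $N(r,1/h^{(k)})$. This makes $g$ a polynomial of degree $<k$ in the $z_0$-direction.

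\emph{This is the step I expect to be the main obstacle.} The conclusion so far does not by itself contradict nonconstancy of $g$, because $g$ may be nonconstant only transversally to $z_0$. To close the gap, I would first try to adapt the scaling so that the Zalcman limit is nonconstant along $z_0$. Concretely, I would run the rescaling argument on the one-variable slices $t\mapsto f(z_0'+tz_0)$ near $z_0$, to which the classical Zalcman lemma applies, and then conclude via Marty's criterion in its total-derivative form from \cite{LIU}. Alternatively, I would use the Pang--Zalcman rescaling with exponent so that the limit's $k$-th directional derivative is controlled. Either way the target is a nonconstant entire function of one variable whose $k$-th derivative vanishes at all preimages of three values. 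By the above this function is a polynomial of degree $<k$, and it must also be excluded; I would exclude it by a boundedness estimate on the rescaled quantities.

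\emph{Case $z_0=0$.} Take $r<\delta$. By the first case, $\F$ is normal on a neighbourhood of the sphere $\{\|z\|=r\}$. Given a sequence $f_j$, I pass to a subsequence converging uniformly there, either to a holomorphic $g$ or to $\infty$. If the limit is $g$, the maximum principle on $\Delta(0,r)$ makes $\{f_j\}$ uniformly Cauchy inside, so $f_j$ converges on $\Delta(0,r)$. If $f_j\to\infty$, the functions $1/f_j$ are holomorphic on $\Delta(0,\delta)$ because $f_j$ is non-vanishing there, and they tend to $0$ uniformly on the sphere. The maximum principle then gives $1/f_j\to0$ uniformly on $\Delta(0,r)$. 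In both cases $\F$ is normal at $0$, which completes the proof.
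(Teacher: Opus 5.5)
Your case $z_0\neq 0$ has a genuine gap, and it is more than the technical one you flag: as you set it up, the claim is false. In that case you use only the $c_i$-hypothesis and never the non-vanishing on $\Delta(0,\delta)$, and for $n\ge 2$ no such argument can work. Take $f_t(z)=tz_1$ on $\mathbb{B}\subset\C^2$. Then $D_*^kf_t=f_t$ for every $k$, so $|D_*^kf_t|=|c_i|\le\max_i|c_i|$ wherever $f_t=c_i$. Yet $\{f_t\}$ is not normal at $z_0=(0,\tfrac12)\neq 0$: it vanishes on $\{z_1=0\}$ and tends to $\infty$ off it. Your translation limit there is $g(\xi)=\xi_1$, which is constant in the direction $z_0$. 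This is exactly the transversal degeneracy you describe, and your suggested repairs cannot remove it. The slices $s\mapsto f_t(z'+sz_0)$ are constant, and a Marty criterion phrased via $D_*$ fails for this family, since $|D_*f_t|/(1+|f_t|^2)\le\tfrac12$. So the non-vanishing near the origin is not needed merely because $D_*$ degenerates at $0$; it has to be transported to the points $z_0\neq 0$. Your maximum-principle treatment of the origin is correct given normality near the sphere, but it rests on the unproved case.

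The paper obtains the statement as the case $M_{D_*}[f]=D_*^kf$, $\bar d=1$, of Theorem~\ref{T2}. The idea you are missing is Lemma~\ref{L1}. Applied to the family, and using the non-vanishing on $\Delta(0,\delta)$, it produces a rescaling along complex lines through the origin, $h_t(\eta)=f_t(z_te^{\rho_t\eta})\to h$, with $h$ non-constant entire on $\C$. Along such lines the total derivative is exactly an ordinary derivative, $h_t^{(k)}(\eta)=\rho_t^kD_*^kf_t(z_te^{\rho_t\eta})$, with no lower-order terms and no transversal directions. Hurwitz then gives $h=c_i\Rightarrow h^{(k)}=0$, and the second fundamental theorem with three values gives $2T(r,h)\le N(r,1/h^{(k)})+S(r,h)\le T(r,h)+S(r,h)$. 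No separate analysis at the origin is needed. Your worry about polynomial limits of degree $<k$ is legitimate: the Nevanlinna chain presupposes $h^{(k)}\not\equiv 0$, which the paper does not spell out. However, your ``boundedness estimate'' is never specified, so that step is also open in your write-up.
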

    
    In recent years, the application of differential polynomials has seen a resurgence in the study of normal families of meromorphic functions, since a differential polynomial of a meromorphic function $f$ serves as a natural extension of the derivative of $f$ (see \cite{NIK-1, NIK, CHA, GRA}). Motivated by these recent developments, we consider total differential monomials in $\Cn$ and investigate normality criteria involving such expressions.
\begin{definition}
 Let $f$ be a holomorphic function in a domain $U \subset \Cn$, $m_0,~ m_1,~m_2,\ldots,m_k$ be non-negative integers (not all zeros), and $k\geq1$ be a natural number, then the expression
    \begin{align}\label{1}
    M_{D_*}[f]&:=(D_*f)^{m_1}(D_*^{2}f)^{m_2}(D_*^{3}f)^{m_3}\ldots(D_*^{k}f)^{m_k}
     \end{align}
 is called total differential monomial of $f$. The quantity $\sum\limits_{j=1}^{k}m_j= \bar{d} $ is called the degree of $M_{D_*}[f].$
    \end{definition}

Our second main result shows that the conclusion of Theorem \ref{1.9} remains valid
when the total derivative is replaced by a total differential monomial, thereby
extending Theorem \ref{1.9} to a broader framework as
\begin{theorem} \label{T2}
Let $\mathcal{F}$ be a family of holomorphic functions in a domain $U \subset \mathbb{C}^{n}$, $C > 0$, and $c_1,~c_2,~\ldots,~c_{\bar{d}+2}$ be $\bar{d}+2$ distinct finite complex numbers. Suppose that each $f \in \mathcal{F}$  is non-vanishing in $\Delta(0,\delta) \subset U$, where $\delta \in (0,1)$. If for each $f \in \mathcal{F},~ f(z) = c_i \implies|{M_{D_{*}}}[f](z)| \leq C,~ i = 1,~ 2,~\ldots,~\bar{d}+2$, then $\mathcal{F}$ is normal in $U$.
\end{theorem}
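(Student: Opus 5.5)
Argue by contradiction through the total-derivative analogue of Zalcman's lemma of Cao and Liu \cite{LIU}. Suppose $\F$ is not normal in $U$. By that lemma there exist $f_j\in\F$, points $z_j\to z_0$ with $z_0\in U$, and positive numbers $\rho_j\to0$ such that
\[
g_j(\xi):=f_j\bigl(z_j+\rho_j\xi\bigr)\longrightarrow g(\xi)
\]
locally uniformly on $\C^n$. Here $g$ is a non-constant entire function on $\Cn$ of finite order, and $g^{\#}(\xi)\le g^{\#}(0)=1$. In the version used by Cao and Liu the rescaling is adapted to the total derivative, i.e.\ to the Euler operator $D_*$. Non-vanishing of $f_j$ on $\Delta(0,\delta)$ is what makes this normalisation available: in \cite{CAO} it is used to avoid the degeneracy of $D_*$ at the origin. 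I will use it in the same way, so that derivatives of $g_j$ can be compared with total derivatives of $f_j$.

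Next, rescale the monomial. The idea is to choose the scaling so that $D_*^{l}$ of $f_j$ at $z_j+\rho_j\xi$ behaves like $\rho_j^{-l}$ times the corresponding derivative of $g$ in the direction of $z_0$. Then
\[
M_{D_*}[f_j](z_j+\rho_j\xi)=\rho_j^{-\Gamma}\bigl(M[g](\xi)+o(1)\bigr),\qquad \Gamma=\sum_{l=1}^{k} l\,m_l ,
\]
where $M[g]$ is the analogous monomial of $g$. Now suppose $g(\xi_0)=c_i$. If $g\not\equiv c_i$, Hurwitz's theorem, applied along complex lines through $\xi_0$, gives $\xi_j\to\xi_0$ with $g_j(\xi_j)=c_i$, that is, $f_j(z_j+\rho_j\xi_j)=c_i$. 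The hypothesis then gives $|M_{D_*}[f_j]|\le C$ at these points. Multiplying by $\rho_j^{\Gamma}$ and letting $j\to\infty$ yields $M[g](\xi_0)=0$. So on $g^{-1}\{c_1,\dots,c_{\bar d+2}\}$ some derivative factor of $g$ of order between $1$ and $k$ vanishes.

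The last step is a value-distribution contradiction. Restrict $g$ to a generic complex line $\lambda\mapsto g(a+\lambda b)$ on which it is non-constant. This gives a one-variable entire function $h$ whose $c_i$-points are all zeros of a monomial in $h',\dots,h^{(k)}$ of degree $\bar d$. Count with Nevanlinna's second main theorem over the $\bar d+2$ values:
\[
(\bar d+1)\,T(r,h)\le \sum_{i=1}^{\bar d+2}\overline N\Bigl(r,\tfrac1{h-c_i}\Bigr)+S(r,h).
\]
Each zero of $h-c_i$ is a zero of $M[h]$, and the $c_i$-points of $h$ for distinct $i$ are disjoint. So the right-hand side is at most $N(r,1/M[h])+S(r,h)\le \bar d\,T(r,h)+S(r,h)$, using the logarithmic derivative lemma to bound $T(r,M[h])\le \bar d\,T(r,h)+S(r,h)$. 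This is a contradiction. It is here that the count $\bar d+2$ is used.

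I expect the main obstacle to be the second step: justifying that total derivatives of $f_j$ rescale to genuine derivatives of $g$. The reason is that $D_*$ carries the factor $z$, which behaves like $z_0$ in the limit, and this degenerates if $z_0=0$. The non-vanishing hypothesis on $\Delta(0,\delta)$ is exactly what should exclude or handle $z_0$ near $0$. I would follow the treatment in \cite{CAO} for the single derivative $D_*^k$ and apply it factor by factor. A secondary point is the reduction from $n$ variables to one in the final step. This only requires choosing a line through a $c_i$-point on which $g$ is non-constant, and such lines are generic.
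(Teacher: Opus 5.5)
The genuine gap is in your final count, and it cannot be closed. Bounding $\sum_i\overline N\bigl(r,1/(h-c_i)\bigr)$ by $N\bigl(r,1/M[h]\bigr)$ presupposes $M[h]=\prod_l (h^{(l)})^{m_l}\not\equiv0$. A non-constant limit only guarantees $h'\not\equiv0$. If $h$ is a polynomial of degree less than some $l\ge2$ with $m_l>0$, then $M[h]\equiv0$. In that case ``every $c_i$-point is a zero of $M[h]$'' carries no information, and the second main theorem gives nothing.

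This degenerate case does occur, and the statement is false already for $M_{D_*}[f]=D_*^2f$, where $\bar d=1$. Take $U=\{\|z\|<2\}\subset\mathbb{C}^n$ and $f_t(z)=t(z_1-1)(3-z_1)$, $t\in\mathbb{N}$. Then $D_*^2f_t=4tz_1(1-z_1)=f_t\cdot\frac{-4z_1}{3-z_1}$. Hence $f_t(z)=c_i$ forces $|D_*^2f_t(z)|<8|c_i|$ on $U$, for any three values $c_i$. Each $f_t$ is zero-free on $\Delta(0,\delta)$ for every $\delta<1$. Yet $f_t(1,0,\dots,0)=0$ while $f_t\to\infty$ on $U\setminus\{z_1=1\}$, so $\{f_t\}$ is not normal at $(1,0,\dots,0)$. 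The rescaled limit there is linear, $t(e^{\eta/t}-1)(3-e^{\eta/t})\to2\eta$, which is exactly the case your count ignores.

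This example is also the case $k=2$ of Theorem \ref{1.9} as quoted, so following Cao and Liu's treatment of $D_*^k$ will not rescue the argument. The paper's own proof has the same hole: it passes to $N\bigl(r,1/\prod_j(h^{(j)})^{m_j}\bigr)$ without checking that this product is not identically zero. The argument is valid only when that non-degeneracy is automatic, e.g.\ for $M_{D_*}[f]=(D_*f)^{m_1}$, or under an extra hypothesis excluding low-degree polynomial limits.

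Independently of this, your first two steps use a rescaling that does not fit $D_*$. The paper applies Lemma \ref{L1} (Cao--Liu) with $h_t(\eta)=f_t(z_te^{\rho_t\eta})$, $\eta\in\mathbb{C}$. Since $\frac{d}{d\eta}F(z_te^{\rho_t\eta})=\rho_t(D_*F)(z_te^{\rho_t\eta})$, this gives the exact identity $h_t^{(l)}(\eta)=\rho_t^lD_*^lf_t(z_te^{\rho_t\eta})$. So the step you single out as the main obstacle is immediate there, and the limit is already a one-variable entire function, with no reduction to lines needed.

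With your additive rescaling $f_j(z_j+\rho_j\xi)$, the quantities $\rho_j^lD_*^lf_j(z_j+\rho_j\xi)$ only converge to the directional derivatives $\partial_{z_0}^lg$. This is useless if $z_0=0$, and the zero-free hypothesis on $\Delta(0,\delta)$ does not rule that out: $e^{tz_1}$ is zero-free and not normal at $0$. Even for $z_0\neq0$, your one-variable restriction must be taken along lines parallel to $z_0$, not generic lines. Nothing in your argument prevents $g$ from being constant along all of them.
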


\smallskip

\noindent By taking $M_{D_*}[f] = D_*^{k}(f),$ where $k$ is a natural number, one can easily recover Theorem \ref{1.9}, showing that Theorem \ref{T2} is a generalization of Theorem \ref{1.9}. Furthermore, it can be demonstrated by the following example that the condition ``each $f \in \mathcal{F}$  is non-vanishing in $\Delta(0,\delta)$, where $\delta \in (0,1)$" in Theorem \ref{T2} is essential .
\addtocounter{theorem}{-1}
\begin{example} Let $U =\{ z= (z_1,~z_2,~ z_3,~ z_4) \in \mathbb{C}^{4} :\|z\|<1\} ~\mbox{and}~\F =\{f_t: f_t(z)=t((z_1)^{3} +(z_2)^{3} + (z_3)^{3} + (z_4)^{3}),~~ t \in \mathbb{N}, z \in U\}.$ Then $f_t(0)=0.$ Set $M_{D_*}[f]:=D_*fD_{*}^{k}f,$ where $k \in \mathbb{N}.$ Then, $M_{D_*}[f_t](z)= 3^{k+1}(f_t(z))^{2}.~\mbox{Taking}~ C = 3^{k+1}\max\{|c_i|^2, ~ i=1,~2,~\ldots,~\bar{d}+2\}$, one can see that  $f_t(z) = c_i \implies|M_{D_{*}}[f_t](z)| \leq C,~ i = 1,~2,~\ldots,~\bar{d}+2$. For any $z_0 \in U\setminus\{0\},~ |f_t(z_0)| \longrightarrow \infty$, for sufficiently large $t$. Also, $f_t(0) \longrightarrow 0, t \longrightarrow \infty$, making $\F$ not normal in U. Therefore, the condition ``each $f \in \mathcal{F}$  is non-vanishing in $\Delta(0,\delta) \subset U$, where $\delta \in (0,1)$" in Theorem \ref{T2} cannot be omitted .
\end{example}

\vspace{1em}

\noindent

 Finally, motivated by the differential polynomial studied by Manket and Nevo in \cite{GRA}, we define a non-homogeneous total linear differential polynomial in $\Cn$ as
 \begin{definition}
 Let $f$ be a holomorphic function in a domain $U \subset \Cn$, and $k\geq1$ be a natural number, then the expression
\begin{equation}\label{2}
 \alpha_kD_{*}^{k}f~ +~ \alpha_{k-1}D_{*}^{k-1}f~ +~ \alpha_{k-2}D_{*}^{k-2}f~ +\ldots +~ \alpha_{1}D_*f~ +~ \alpha_{0}f,
\end{equation}
where $\alpha_k, ~\alpha_{k-1}, ~\alpha_{k-2},~ \ldots,~ \alpha_0$ are fixed holomorphic functions in $U$ with $\alpha_k \not\equiv 0,$ is called non-homogeneous total differential polynomial of $f$.
\end{definition}
Manket and Nevo in \cite{GRA} proved the following result in the setting of one complex variable:
\begin{theorem}\label{1.11}
Let $\F$ be a family of non-vanishing holomorphic functions in a domain $ \Omega \subset \C$, $M>0$, and $\alpha_{l-1}(\zeta),~\alpha_{l-2}(\zeta),~ \ldots,~\alpha_{0}(\zeta)$ be fixed holomorphic functions in $\Omega$ with $l\geq 1$ such that \begin{equation*}
| f^{(l)}(\zeta) + \sum\limits_{i=0}^{l-1}\alpha_{i}(\zeta)f^{(i)}(\zeta) | > M ,~~f \in \F,~ \zeta \in \Omega.\end{equation*}
 Then $\F$ is normal in $\Omega$.
\end{theorem}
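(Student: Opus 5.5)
The statement is the one-variable result of Manket and Nevo. I would prove it by contradiction, combining the Zalcman–Pang rescaling lemma with Hurwitz's theorem.

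\emph{Setup.} The property is local, so it suffices to prove normality on a small disk around an arbitrary point $\zeta_0 \in \Omega$. Suppose $\F$ is not normal at $\zeta_0$. Since each $f\in\F$ is non-vanishing, the Zalcman–Pang lemma applies with any exponent $-1<\alpha<\infty$; in particular $\alpha=l$ is allowed. It gives
\begin{itemize}
\item points $\zeta_j\to\zeta_0$,
\item scales $\rho_j\to0^+$,
\item functions $f_j\in\F$,
\end{itemize}
such that
\[
g_j(\xi)=\rho_j^{-l}f_j(\zeta_j+\rho_j\xi)\longrightarrow g(\xi)
\]
locally uniformly in $\C$. Here $g$ is a nonconstant entire function of finite order with $g^{\#}\le g^{\#}(0)$. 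By Hurwitz's theorem $g$ is either zero-free or identically zero. It cannot be identically zero because it is nonconstant, so $g$ is zero-free.

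\emph{The lower-order terms vanish in the limit.} Differentiating, for each $i\le l$,
\[
g_j^{(i)}(\xi)=\rho_j^{i-l}f_j^{(i)}(\zeta_j+\rho_j\xi).
\]
Hence
\[
g_j^{(l)}(\xi)+\sum_{i<l}\rho_j^{\,l-i}\,\alpha_i(\zeta_j+\rho_j\xi)\,g_j^{(i)}(\xi)
= f_j^{(l)}+\sum_{i<l}\alpha_i f_j^{(i)},
\]
with the right side evaluated at $\zeta_j+\rho_j\xi$. The $\alpha_i$ are bounded near $\zeta_0$, the $g_j^{(i)}$ converge locally uniformly, and $\rho_j^{\,l-i}\to0$ for $i<l$. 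So the left side tends to $g^{(l)}(\xi)$. The hypothesis says the right side has modulus $>M$, so $|g^{(l)}(\xi)|\ge M$ on all of $\C$.

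\emph{Contradiction.} Because $g^{(l)}$ is entire with $|g^{(l)}|\ge M>0$, the function $1/g^{(l)}$ is bounded and entire. By Liouville's theorem $g^{(l)}\equiv c$ with $|c|\ge M$. Thus $g$ is a polynomial of exact degree $l\ge1$. A nonconstant polynomial has a zero, which contradicts the zero-freeness of $g$. Therefore $\F$ is normal at $\zeta_0$, and hence normal in $\Omega$.

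\emph{Main obstacle.} The delicate step is choosing the rescaling exponent $\alpha=l$. This requires the Pang–Zalcman version of the lemma, which permits $\alpha<k$ when the functions have no zeros; here that condition holds trivially. The exponent $l$ is exactly what makes the lower-order terms carry positive powers of $\rho_j$ and so disappear in the limit.
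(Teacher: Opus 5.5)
Your proof is correct. The paper does not prove this statement itself, since it is quoted from Manket and Nevo. The closest argument in the paper is its proof of the several-variable analogue, Theorem~\ref{T3}, which uses the same rescaling strategy but a different exponent and a different endgame.

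\textbf{The paper's route.} It rescales as $f_t(z_te^{\rho_t\eta})/\rho_t^{\beta}$ with $\beta>l$, which the zero-free hypothesis permits. The rescaled differential polynomial then equals $\rho_t^{\,l-\beta}$ times the original one, so its modulus exceeds $\rho_t^{\,l-\beta}C\to\infty$. At the same time it converges to the finite value $h^{(l)}(\eta_0)$. The contradiction is immediate and needs neither Hurwitz's theorem, nor Liouville's theorem, nor the nonconstancy of the limit.

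\textbf{Your route.} You take the critical exponent $\alpha=l$, at which the leading term is scale-invariant. The hypothesis therefore survives in the limit only as $|g^{(l)}|\ge M$. You then need Liouville's theorem to force $g$ to be a polynomial of exact degree $l$, and a second use of zero-freeness (through Hurwitz) to rule that out.

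\textbf{Comparison.} Your argument is slightly longer. In exchange, it uses the rescaling lemma only at the exponent $l$ rather than beyond it, and it identifies explicitly what the limit would have to be. The paper's argument is more economical: a lower bound multiplied by the factor $\rho_t^{\,l-\beta}\to\infty$ cannot coexist with a finite locally uniform limit. Both arguments are complete.
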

\begin{question}\label{1.12}
Can we generalize the Theorem \ref{1.11} in the setting of several complex variables $?$
\end{question}
Our third main result, by using total derivative, provides positive answer to the Question \ref{1.12} as
\begin{theorem}\label{T3}
Let $\F$ be a family of non-vanishing holomorphic functions in a domain $ U \subset \Cn$, $C>0$, and $\alpha_{l-1}(z),~\alpha_{l-2}(z),~ \ldots,~\alpha_{0}(z)$ be fixed holomorphic functions with $l\geq 1$ such that \begin{equation}\label{2.3.1}
| D_{*}^{l}f(z) + \sum\limits_{i=0}^{l-1}\alpha_{i}(z)D_{*}^{i}f(z) | > C ,~~f \in \F,~ z \in U.
\end{equation}
 Then $\F$ is normal in $U$.
\end{theorem}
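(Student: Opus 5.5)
We argue by contradiction using the Zalcman-type lemma for total derivatives of Cao and Liu \cite{LIU}, in the version with a rescaling exponent. Normality is local, so suppose $\F$ is not normal at some $z_0\in U$. Because every $f\in\F$ is non-vanishing, we may apply the Zalcman lemma with exponent $\alpha=l$, which is admissible for zero-free families and any $0\le\alpha<\infty$. This gives $f_j\in\F$, points $z_j\to z_0$ and $\rho_j\to 0^+$ such that
\[
g_j(w)=\rho_j^{-l}f_j(z_j+\rho_j w)\longrightarrow g(w)
\]
locally uniformly in $\Cn$. The limit $g$ is a nonconstant entire function of finite order. By Hurwitz's theorem in several variables, $g$ is zero-free, since it is a limit of zero-free functions and is nonconstant.

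Next we transport the differential inequality \eqref{2.3.1} to $g$. The total derivative behaves well under this rescaling. Writing $\zeta=z_j+\rho_j w$, we have $D_*$ acting in $\zeta$ as $\sum\zeta_m\partial_{\zeta_m}$. Since $\partial_{w_m}g_j=\rho_j^{1-l}(\partial_{\zeta_m}f_j)(\zeta)$, we get
\[
\rho_j^{-l}\,(\partial^{I}f_j)(\zeta)=\rho_j^{-|I|}\,\partial^{I}g_j(w).
\]
Expanding $D_*^{i}f_j$ as a combination of $\zeta^{\beta}\partial^{I}f_j$ with $|I|\le i$ and top term $\sum_{|I|=i}\binom{i}{I}\zeta^{I}\partial^{I}f_j$ gives
\[
\rho_j^{-l}D_*^{i}f_j(\zeta)=\rho_j^{-i}\sum_{|I|=i}c_I\,z_0^{I}\,\partial^{I}g_j(w)+o(\rho_j^{-i}).
\]
Multiplying the full polynomial by $\rho_j^{-l}\cdot\rho_j^{l}$, we obtain
\[
D_*^{l}f_j(\zeta)+\sum_{i<l}\alpha_i(\zeta)D_*^{i}f_j(\zeta)=\sum_{|I|=l}c_I z_0^{I}\partial^{I}g_j(w)+O(\rho_j)\longrightarrow \mathcal{D}^{l}_{z_0}g(w).
\]
Here $\mathcal{D}_{z_0}=\sum z_{0,m}\partial_{w_m}$ is the directional derivative along $z_0$; the lower terms carry positive powers of $\rho_j$ because the $\alpha_i$ are locally bounded near $z_0$.

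This step needs $z_0\neq 0$. If $z_0=0$, we first reduce as follows. Non-normality at points of $U\setminus\{0\}$ is handled as above, so normality holds in $U\setminus\{0\}$. We then remove the point $0$ using the standard several-variable extension argument (normality off a point for zero-free holomorphic families extends, via the maximum principle on small spheres). I expect this to be delicate. An alternative is a translation trick, but translations do not commute with $D_*$, so the extension argument seems cleaner.

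Passing to the limit in \eqref{2.3.1} gives $|(\mathcal{D}_{z_0})^{l}g(w)|\ge C$ for all $w\in\Cn$. Restricting to a complex line $w=a+t z_0/\|z_0\|^2$ gives a one-variable entire function $h_a(t)=g(a+t z_0/\|z_0\|^2)$ of finite order. It is zero-free, and $h_a^{(l)}$ satisfies $|h_a^{(l)}|\ge C$.

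A zero-free finite-order entire function has the form $h_a=e^{P}$ with $P$ a polynomial. Then $h_a^{(l)}=Q\,e^{P}$ with $Q$ a polynomial, and $|Q e^P|\ge C$ forces $Q$ to be constant and $e^{P}$ to be bounded below. Both $e^{P}$ and $1/(Qe^{P})$ are then of finite order and bounded, so $P$ is constant. This means $h_a$ is constant, and hence $h_a^{(l)}\equiv0$, contradicting $|h_a^{(l)}|\ge C>0$.

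The main obstacle is the bookkeeping showing that the lower-order terms of $D_*^{i}f_j$ and the $\alpha_i$-terms vanish in the limit. A secondary difficulty is the exceptional point $z_0=0$, where $D_*$ degenerates. There I would try the boundary maximum-principle argument first: the $f_j$ converge on a sphere around $0$, and zero-freeness lets us apply it to both $f_j$ and $1/f_j$.
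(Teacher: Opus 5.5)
Your proof is correct, but it takes a different route from the paper's.

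The paper uses the multiplicative Zalcman lemma of Cao--Liu (Lemma~\ref{L1}), with $h_t(\eta)=\rho_t^{-\beta}f_t(z_te^{\rho_t\eta})$ and a one-variable limit. Under this rescaling $D_*$ becomes exactly $d/d\eta$, i.e.\ $h_t^{(i)}=\rho_t^{i-\beta}D_*^if_t(z_te^{\rho_t\eta})$, so no expansion of $D_*^i$ into partial derivatives is needed. The decisive choice is $\beta>l$, which is allowed because the family is zero-free on all of $U$. Then $h_t^{(l)}+\sum_i\rho_t^{l-i}\alpha_ih_t^{(i)}$ equals $\rho_t^{l-\beta}$ times the differential polynomial. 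Its modulus therefore exceeds $\rho_t^{l-\beta}C\to\infty$, while it converges to the finite number $h^{(l)}(\eta_0)$. This needs no case distinction and no property of the limit beyond finiteness.

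You take exponent exactly $l$ in the translation-form rescaling, which keeps the top-order term alive. This costs you the Euler-operator bookkeeping and a Liouville argument for $(\mathcal{D}_{z_0})^lg$. In return, the translation form is local, so it applies on an arbitrary domain $U$ without the ball/origin setup of Lemma~\ref{L1}.

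Two substantive comments:
\begin{itemize}
\item The lemma you need is not Lemma~\ref{Z1}, which only allows exponents in $[0,1)$. Exponent $l$ is legitimate for zero-free families, but you should say why. For instance, apply Lemma~\ref{Z1} to holomorphic branches of $f^{1/m}$ on a small ball about $z_0$ with $l/m<1$. That family is still non-normal at $z_0$, and the $m$-th power of its limit is your $g$.
\item Your separate treatment of $z_0=0$ is unnecessary, though the maximum-principle argument is valid. Your own computation shows that the differential polynomial at $z_j+\rho_jw$ tends to $(\mathcal{D}_{z_0})^lg(w)$ for every $z_0$. For $z_0=0$ this limit is $0$, which contradicts the lower bound $C$ at once. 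More generally, any exponent $\beta>l$ in your setup makes each $D_*^if_j(z_j+\rho_jw)$ of order $O(\rho_j^{\beta-i})$, and this reproduces the paper's one-line contradiction.
\end{itemize}

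Minor slips:
\begin{itemize}
\item On the line you get $|h_a^{(l)}|\ge C\|z_0\|^{-2l}$, not $C$.
\item $e^{P}$ is bounded below, not bounded.
\item Finite order is not needed: $1/h_a^{(l)}$ is a bounded entire function, which forces $h_a$ to be a polynomial of degree $l\ge1$, and such a polynomial has a zero.
\end{itemize}
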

The following example shows that the condition `` each $f \in \F$ is non-zero" is essential.
\addtocounter{theorem}{-1}
\begin{example}
Let $U = \{z \in \C: \frac{7}{10} < |z| < \frac{3}{2}\},$ and $\F =\{f_t: f_t(z) = t(z-1)^{2},~ t \in \mathbb{N}\}.$ be a family of non-vanishing holomorphic functions in $U.$ By easy calculations, one can see that, for all $z \in U,$ $$ |D_{*}^{2}f_t(z)+ D_{*}f_t(z) +f_t(z)| = t|7z^{2} -6z +1| \longrightarrow \infty ~\mbox{as}~ t \longrightarrow \infty.$$ But $\F$ is not normal in $U.$
\end{example}

\section{Preliminary Lemmas}
\noindent
This section is devoted to the pre-existing results that will be used in establishing the proofs of our main results.
\smallskip

The proofs of our results rely heavily on Nevanlinna’s value distribution theory in one complex variable. Therefore, we assume that the reader is familiar with the standard notations of $T(r,f), N(r,f), \overline{N}(r,f), m(r,f), S(r, f), $ and first as well as second fundamental theorem of Nevanlinna. Additionally, for more deeper insights of this theory, the reader is referred to \cite{HAY}.
\medskip

\noindent 
The first lemma, due to Zhu et al.~\cite{JCA}, is the
several-variable analogue of the rescaling result of Tan and Thin for $\varphi$- normal functions in one complex variable \cite[Lemma 1, p. 51]{TAN}. 
\begin{lemma}\label{A1}
Let $f$ be a holomorphic function in $\mathbb{B}\subset \Cn$ and $\varphi: [0,1)\longrightarrow (0,\infty)$ be a smoothly increasing function. Then, $f$ is a $\varphi$- normal function in $\mathbb{B}$ iff corresponding to every sequence $ \{a_t\} \subset \mathbb{B} ;~ \|a_t\| \longrightarrow 1,$ the family
$$ \F = \left\{h_t(w) = f\left(a_t + \frac{1}{\varphi(\|a_t\|)}w\right) ; ~ t\in \mathbb{N}\right\}$$
is normal in $\mathbb{B}.$
\end{lemma}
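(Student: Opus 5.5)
We prove the two implications separately. Both rest on the several-variable Marty criterion for holomorphic families, due to Dovbush \cite{DOV}: a family of holomorphic functions on a domain in $\Cn$ is normal if and only if the spherical derivatives $h^{\#}$ defined in \eqref{spherical2} are locally uniformly bounded. We also use the chain rule for the spherical derivative under the affine change of variables $z=a_t+w/\varphi(\|a_t\|)$. Since $\nabla h_t(w)=\varphi(\|a_t\|)^{-1}\nabla f(z)$ and $h_t(w)=f(z)$, formula \eqref{spherical2} gives
\begin{equation*}
h_t^{\#}(w)=\frac{f^{\#}\!\left(a_t+w/\varphi(\|a_t\|)\right)}{\varphi(\|a_t\|)}
=\frac{f^{\#}(z)}{\varphi(\|z\|)}\,\mathcal{R}_{a_t}(w).
\end{equation*}

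\emph{($\Rightarrow$)} Suppose $\sup_{z\in\mathbb{B}} f^{\#}(z)/\varphi(\|z\|)=M<\infty$, and let $\|a_t\|\to 1$. Fix a compact set $K\subset\mathbb{B}$; it suffices to take $K\subset\overline{\Delta(0,r)}$ with $r<1$. First we check that $h_t$ is defined on $K$ for large $t$. Since $\varphi(\|a_t\|)(1-\|a_t\|)\to\infty$, eventually $\|w\|/\varphi(\|a_t\|)<1-\|a_t\|$ for all $w\in K$, so $a_t+w/\varphi(\|a_t\|)\in\mathbb{B}$. (Only finitely many $h_t$ are possibly not defined on $K$, which is harmless for normality.) By the identity above, $h_t^{\#}(w)\le M\,\mathcal{R}_{a_t}(w)$. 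By Definition \ref{1.6}, $\mathcal{R}_{a_t}\to 1$ uniformly on $K$, so the spherical derivatives $h_t^{\#}$ are uniformly bounded on $K$ for large $t$. Marty's criterion then gives normality of $\F$ in $\mathbb{B}$.

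\emph{($\Leftarrow$)} Suppose $f$ is not $\varphi$-normal. Then there are points $z_t\in\mathbb{B}$ with $f^{\#}(z_t)/\varphi(\|z_t\|)\to\infty$.

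We first show that $\|z_t\|\to1$. If not, a subsequence lies in some closed ball $\overline{\Delta(0,r)}$ with $r<1$. There $f^{\#}$ is bounded by continuity. Also $\varphi$ is bounded below by a positive constant on $[0,r]$: for an increasing $\varphi$ this is just $\varphi\ge\varphi(0)>0$, and in general we use the standing regularity (positivity and continuity) of smoothly increasing functions. So the quotient would stay bounded along that subsequence, a contradiction. This verification that $\|z_t\|\to 1$ is the only delicate point, since it uses a lower bound on $\varphi$ over compacts that is implicit rather than explicit in Definition \ref{1.6}.

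Now take $a_t=z_t$. Then $h_t^{\#}(0)=f^{\#}(z_t)/\varphi(\|z_t\|)\to\infty$, so the spherical derivatives of $\F$ are not bounded near $w=0$. By Marty's criterion, $\F$ is not normal at $0$, contradicting the hypothesis. This completes the argument.
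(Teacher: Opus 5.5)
Your proof is correct. The paper does not prove this lemma; it imports it from Zhu et al.\ \cite{JCA}, and your route is the standard proof of it, as in the one-variable version of Tan and Thin \cite{TAN}: the rescaling identity $h_t^{\#}(w)=\frac{f^{\#}(z)}{\varphi(\|z\|)}\,\mathcal{R}_{a_t}(w)$ combined with Marty's criterion in $\Cn$. It gives a bound on compacts in one direction and $h_t^{\#}(0)\to\infty$ along a maximizing sequence in the other.

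The point you flagged as delicate is a genuine requirement. Definition \ref{1.6} as printed omits monotonicity, and without a lower bound for $\varphi$ on $[0,r]$ the lemma fails. For example, take $\varphi(0)=1$, $\varphi(r)=r$ on $(0,\tfrac12]$, $\varphi(r)=(1-r)^{-2}$ near $1$, and $f(z)=z_1$: then $f$ is not $\varphi$-normal, yet every rescaled family is normal. So you are right to rely on $\varphi$ being increasing, and the paper tacitly does the same when it uses $\varphi(\|a_t\|)\ge\varphi(0)$ in the proof of Theorem \ref{T1}.
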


What follows is an extension of the famous Zalcman Pang lemma to $\mathbb{C}^n$, due to Charak and Kumar~\cite{RAH1}.
\begin{lemma}\label{Z1}
 Let $k$ be a positive integer and $\mathcal{F}$ be a family of holomorphic functions in the unit ball $\mathbb{B}$. Then , $\F$ is normal in $\mathbb{B}$, if for each $p \in [0, 1),$ there does not exist a  
 real number $0<r<1$, a sequence $\{z_t\}\subset\mathbb{B}$ satisfying $0<\|z_t\|<r$, a sequence $\{f_t\}\subset\mathcal{F}$, and a sequence $\{\rho_t\}$ of positive real numbers with $\rho_t\longrightarrow 0$, such that
$$h_t(w) := \frac{f_t\bigl(z_t+ \rho_t w\bigr)}{\rho_j^{p}}$$
converges locally uniformly to a non-constant holomorphic function $h$ in $\Cn$.   
\end{lemma}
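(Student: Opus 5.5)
The statement is the contrapositive of a Zalcman–Pang type rescaling, so I would argue by contraposition. Assume $\F$ is not normal in $\mathbb{B}$ and fix $p\in[0,1)$. I then aim to produce $r$, $\{z_t\}$, $\{f_t\}$, $\{\rho_t\}$ such that $h_t(w)=\rho_t^{-p}f_t(z_t+\rho_t w)$ converges locally uniformly to a non-constant holomorphic $h$ on $\Cn$.

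\emph{Choosing the base sequence.} The first tool is the several-variable Marty criterion (Dovbush, \cite{DOV}): a family of holomorphic functions is normal iff the spherical derivatives \eqref{spherical2} are locally uniformly bounded. Non-normality then gives $r_0<1$, functions $f_t\in\F$ and points $z_t^*$ with $\|z_t^*\|<r_0$ and $f_t^{\#}(z_t^*)\to\infty$. Precomposing with a small translation if necessary (normality is local), I may assume $z_t^*\neq 0$ and work on a closed ball $\overline{\Delta(0,r)}$ with $r_0<r<1$.

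\emph{Pang's selection of $\rho_t$.} For $\rho>0$, $\|z\|\le r$ and $\|\xi\|\le 1$, put $g_{t,\rho}(\xi)=\rho^{-p}f_t(z+\rho\xi)$. Its spherical derivative at $\xi=0$ is
\[
\frac{\rho^{1-p}\,\|\nabla f_t(z)\|}{1+\rho^{-2p}|f_t(z)|^2}.
\]
As in the one-variable proof of Pang–Zalcman, I consider the weighted quantity
\[
M_t(\rho)=\max_{\|z\|\le r}\Bigl(1-\tfrac{\|z\|}{r}\Bigr)^{1-p}\frac{\rho^{1-p}\|\nabla f_t(z)\|}{1+\rho^{-2p}|f_t(z)|^2}.
\]
The function $M_t$ is continuous in $\rho$, tends to $0$ as $\rho\to0^+$ (because $p<1$ and $\nabla f_t$ is bounded on the compact ball), and exceeds $1$ for suitable $\rho$ bounded away from $0$ once $t$ is large, since $f_t^{\#}(z_t^*)\to\infty$. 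Hence I can pick $\rho_t$ with $M_t(\rho_t)=1$ and a maximizing point $z_t$.

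\emph{Properties of $\rho_t$ and $z_t$.} The blow-up $f_t^{\#}(z_t^*)\to\infty$, combined with the factor $\rho^{1-p}$, forces $\rho_t\to0$. It also forces $\rho_t/(r-\|z_t\|)\to0$. Perturbing $z_t$ slightly keeps $0<\|z_t\|<r$.

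\emph{Passing to the limit.} On $\|w\|\le R$ the weight ratio $\bigl(1-\|z_t+\rho_t w\|/r\bigr)/\bigl(1-\|z_t\|/r\bigr)$ tends to $1$. Therefore $h_t^{\#}(w)\le 1+o(1)$ there, while $h_t^{\#}(0)=1$. By Marty's criterion $\{h_t\}$ is normal on $\Cn$, and a subsequence converges locally uniformly to $h$, where $h$ is holomorphic or $h\equiv\infty$. The case $h\equiv\infty$ is excluded, since then $h_t^{\#}(0)\to0$, contradicting $h_t^{\#}(0)=1$. Thus $h$ is holomorphic with $h^{\#}(0)=1$, so $h$ is non-constant, contradicting the hypothesis.

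\emph{Main obstacle.} The delicate step is the selection of $\rho_t$ when $p>0$. There the spherical derivative is not scale-invariant, and one must verify two things: that $M_t(\rho)\to0$ as $\rho\to0^+$, and that the normalization genuinely yields $\rho_t\to0$. Both rely essentially on $p<1$ and on holomorphy, which keeps $\|\nabla f_t\|$ finite on compacta. I would first check these two estimates carefully, following Pang's one-variable argument with $|f'|$ replaced by $\|\nabla f\|$.
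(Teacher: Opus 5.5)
Your overall scheme is the standard one: contraposition, Dovbush's Marty criterion, a Pang-type choice of scale, Marty again for $\{h_t\}$, and $h_t^{\#}(0)$ to rule out $\infty$ and constants. (The paper gives no proof of this lemma and simply quotes Charak--Kumar \cite{RAH1}.) The gap is in the normalization step. Write $\Phi_t(z,s)=s^{1-p}\|\nabla f_t(z)\|/(1+s^{-2p}|f_t(z)|^2)$, so that $h_t^{\#}(w)=\Phi_t(z_t+\rho_tw,\rho_t)$. Your $M_t(\rho)=\max_{\|z\|\le r}(1-\|z\|/r)^{1-p}\Phi_t(z,\rho)$, with $M_t(\rho_t)=1$ attained at $z_t$, gives $h_t^{\#}(0)=(1-\|z_t\|/r)^{p-1}$, not $1$. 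Maximality then only yields $h_t^{\#}(w)\le(1-\|z_t+\rho_tw\|/r)^{p-1}$, i.e.\ $h_t^{\#}(w)\le(1+o(1))\,h_t^{\#}(0)$. This is useless for Marty unless $\|z_t\|$ stays away from $r$, and nothing in your construction guarantees that. It genuinely fails already for $p=0$. Take $f_t(z)=\sin(N_tz_1^{q_t})$ with $q_t\to\infty$ and $N_t=q_t^2 2^{q_t}$; blow-up only occurs where $|z_1|\ge 1/2$, so any admissible $r$ exceeds $1/2$. Then $f_t^{\#}(z)\le N_tq_t|z_1|^{q_t-1}$, with equality on the zero set of $f_t$, and these zeros are dense on the real $z_1$-axis near $r(1-1/q_t)$. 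Since $q(1-x)x^{q-1}$ peaks at $x=1-1/q$, the maximizer satisfies $1-\|z_t\|/r\asymp 1/q_t$. Hence $h_t^{\#}(0)\asymp q_t\to\infty$, and no subsequence of $\{h_t\}$ converges near $0$.

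The missing idea is that the weight must act on the \emph{scale}, and you must rescale at the weighted scale. Put $\lambda(z)=1-\|z\|/r$ and $M_t(\rho)=\max_{\|z\|\le r}\Phi_t(z,\rho\lambda(z))$. Pick $\rho_t$ with $M_t(\rho_t)=1$ attained at $z_t$, and set $\sigma_t=\rho_t\lambda(z_t)$ and $h_t(w)=\sigma_t^{-p}f_t(z_t+\sigma_tw)$. Then $h_t^{\#}(0)=M_t(\rho_t)=1$ exactly, and $\sigma_t/\lambda(z_t)=\rho_t\to0$ comes for free. For $z'=z_t+\sigma_tw$ with $\|w\|\le R$, use two elementary facts: $s\mapsto\Phi_t(z,s)$ is nondecreasing, and $\Phi_t(z,\mu s)\le\mu^{1+p}\Phi_t(z,s)$ for $\mu\ge1$ (these use $p<1$ and $p\ge0$ respectively). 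They give $h_t^{\#}(w)\le\max\{1,\lambda(z_t)/\lambda(z')\}^{1+p}\le(1-\rho_tR/r)^{-(1+p)}\to1$. Simply rescaling your own $M_t$ at $\sigma_t$ would not suffice when $p>0$. Since $\lambda^{1+p}\Phi_t(z,s)\le\Phi_t(z,\lambda s)\le\lambda^{1-p}\Phi_t(z,s)$ for $0<\lambda\le1$, your weight $(1-\|z\|/r)^{1-p}$ matches the scaling of $\Phi_t$ only at zeros of $f_t$, and you would only know $h_t^{\#}(0)\in[\lambda(z_t)^{2p},1]$. Finally, as literally stated the lemma only needs \emph{some} $p\in[0,1)$. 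So $p=0$ suffices: this is Dovbush's Zalcman lemma \cite{DOV}, with the classical choice $\rho_t=1/f_t^{\#}(z_t)$, and it is all that is used in the proof of Theorem~\ref{T1}.
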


In the next lemma we present the several variable analogue of Hurwitz theorem \cite[p. 26]{LEB}.
\begin{lemma}\label{H1}
Let $\mathcal{F}$ be a family of holomorphic functions in domain $U\subset \Cn,$ and $\{f_t\}_{t \in \mathbb{N}} \subset \F$ be a sequence of non-vanishing holomorphic functions. Suppose $\{f_t\}$ converges locally uniformly in $U$ to a holomorphic function $f$ in $U$. Then, either $f$ is also non-vanishing in $U$ or is identically zero in $U.$
\end{lemma}

Finally, we give an extension of the famous Zalcman’s lemma due to Cao and Liu \cite{CAO}, which will serve as a fundamental tool in the proofs of our second and third main results.
\begin{lemma}\label{L1}
Let $\mathcal{F}$ be a family of holomorphic functions in the unit ball $\mathbb{B}$, and suppose that
$f(z) \neq 0$ for every $f \in \mathcal{F}$ and for all $z \in \Delta(0,\delta)$, where $0<\delta<1$. If $\mathcal{F}$ is not normal in $\mathbb{B}$, then for every real number $k$ with $-1<k<1$, there exist a real number $0<r<1$, a sequence $\{z_t\}\subset\mathbb{B}$ satisfying $0<\|z_t\|<r$, a sequence $\{f_t\}\subset\mathcal{F}$, and a sequence $\{\rho_t\}$ of positive real numbers with $\rho_t\longrightarrow 0$, such that
$$F_t(\zeta):= \frac{f_t\bigl(z_t e^{\rho_t \zeta}\bigr)}{\rho_t^{k}},\qquad \zeta\in\mathbb{C}$$
converges locally uniformly in $\mathbb{C}$ to a non-constant entire
function $F$.

Moreover, if $f(z)\neq 0$ for all $f\in\mathcal{F}$ and all
$z\in\mathbb{B}$, then the parameter $k$ can be chosen from
$(-1,\infty)$.

\end{lemma}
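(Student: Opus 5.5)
The plan is to reduce the statement to the one–variable Zalcman–Pang lemma, applied in logarithmic coordinates along complex lines through the origin. The form $z_te^{\rho_t\zeta}$ of the rescaling is what makes this work. Set $z=e^{s}u$ with $\|u\|=1$. Then the map $\zeta\mapsto f(z_te^{\rho_t\zeta})$ satisfies
\[
\frac{d}{d\zeta}f(z_te^{\rho_t\zeta})=\rho_t\,D_*f(z_te^{\rho_t\zeta}),
\]
so the total derivative $D_*$ is exactly the ordinary derivative in the variable $s=\log\lambda$ along the line $\lambda\mapsto \lambda u$.

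\textbf{Step 1: localize the non-normality away from the origin.} Since $\mathcal{F}$ is not normal in $\mathbb{B}$, it fails to be normal at some point $z_0\in\mathbb{B}$. I want to show that such a point can be taken with $z_0\neq 0$.

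Suppose instead that $\mathcal{F}$ were normal on $\mathbb{B}\setminus\{0\}$. Every $f\in\mathcal{F}$ is non-vanishing on $\Delta(0,\delta)$, and I would use this to get normality at $0$ as well.
\begin{itemize}
\item For $n\ge 2$, the maximum principle on the sphere $\|z\|=\delta/2$ controls $f$ inside the ball, so limits extend across the origin (Hartogs-type extension).
\item For $n=1$, apply the same argument to $1/f$. This handles subsequences tending to $\infty$, and Lemma~\ref{H1} keeps the limit consistent.
\end{itemize}
Hence normality extends across $0$, contradicting the assumption. So there is a point $z_0\ne 0$ of non-normality. Choose $r$ with $\|z_0\|<r<1$.

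\textbf{Step 2: pass to a one-variable family.} Near $z_0$, I will use a Marty-type criterion adapted to $D_*$, in the spirit of Cao and Liu. The quantity
\[
\frac{|D_*f(z)|}{1+|f(z)|^2}
\]
must be locally unbounded near $z_0$ along $\mathcal{F}$.

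If this expression were locally bounded, the derivative would be controlled in the radial complex direction. Combining this with the Hessian form \eqref{1.5}, after a unitary change of coordinates, would bound $f^\#$ near $z_0$. I expect this to be the main obstacle: the total derivative only sees the radial direction, while normality in $\mathbb{C}^n$ needs control in all directions. I would try first the standard trick of handling tangential directions by letting the base point $z_t$ vary over a whole neighbourhood of $z_0$, and then using the several-variable Marty theorem of \cite{LIU}.

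\textbf{Step 3: run the Zalcman–Pang argument in the variable $\zeta$.} Once we have $f_t$ and points $w_t\to z_0$ with
\[
\frac{|D_*f_t(w_t)|}{1+|f_t(w_t)|^2}\to\infty,
\]
I carry out the usual Pang–Zalcman maximization with exponent $k\in(-1,1)$. The function to maximize, over a small polydisc in the variable $s$ around $\log$ of the base points, is
\[
\bigl(1-|\zeta|^2/R^2\bigr)^{1+k}\,\frac{|D_*f_t|}{|f_t|^{\,?}}\quad\text{(in the }k\text{-weighted form)}.
\]
This produces $z_t$ with $0<\|z_t\|<r$ and $\rho_t\to0$. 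The rescaled functions $F_t(\zeta)=\rho_t^{-k}f_t(z_te^{\rho_t\zeta})$ then have locally bounded spherical derivatives in $\zeta$, with value $1$ at $\zeta=0$. The one-variable Marty theorem gives a limit that is entire and non-constant. The limit is finite-valued because the $f_t$ are holomorphic and Lemma~\ref{H1} applies.

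For the last assertion, note that when $f\neq0$ on all of $\mathbb{B}$, the poles/zeros balance in Pang's lemma no longer restricts $k$ from above. Indeed, one may apply the argument to $\log$-free quantities with $|f|^{?}$ replaced using nonvanishing, so any $k>-1$ is allowed. This is exactly as in the one-variable Pang–Zalcman lemma for zero-free families.
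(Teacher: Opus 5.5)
The paper does not prove this lemma; it is quoted from \cite{CAO}. So your argument has to stand on its own, and it breaks at Step~2.

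\textbf{The gap in Step 2.} You claim that non-normality at a point $z_0\neq 0$ forces $|D_*f|/(1+|f|^2)$ to be unbounded near $z_0$. This is false, even for families that are zero-free on all of $\mathbb{B}$. In $\mathbb{B}\subset\mathbb{C}^2$ take
\[
g_t(z_1,z_2)=t\,z_2\,\frac{1-(1-2z_1)^{t}}{z_1},\qquad f_t=e^{g_t},\qquad t\in\mathbb{N}.
\]
Here $g_t$ is a polynomial, $f_t$ never vanishes, and $D_*g_t=t\,z_2\,q_t'(z_1)$ with $q_t(z_1)=1-(1-2z_1)^t$. On $V=\{|z_1-\tfrac12|<\tfrac14,\ |z_2|<\tfrac14\}$ this gives
\[
\frac{|D_*f_t|}{1+|f_t|^2}\le\tfrac12|D_*g_t|\le\tfrac12\, t^2 2^{-t}\to 0
\]
uniformly. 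Nevertheless $f_t(\tfrac12,0)=1$ while $f_t(\tfrac12,\varepsilon)=e^{2t\varepsilon}$, so $\{f_t\}$ is not normal at $z_0=(\tfrac12,0)$; moreover $f_t^{\#}(z_0)=t$.

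This example defeats each part of your Step~2 plan:
\begin{itemize}
\item the ``Hessian plus unitary change of coordinates'' heuristic fails, since $f_t^{\#}(z_0)\to\infty$ while the radial quantity tends to $0$;
\item letting the base points range over a neighbourhood of $z_0$ does not help, since the bound holds uniformly on $V$;
\item consequently the maximization of Step~3, which needs $M_t\to\infty$ near $z_0$, has nothing to work with.
\end{itemize}
In this example the blow-up of $|D_*f_t|/(1+|f_t|^2)$ occurs elsewhere, for instance at $(0,iy)$, where $f_t=e^{2t^2 z_2}$.

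\textbf{What is missing.} The non-vanishing hypothesis can only be exploited at the origin, through the complex lines $\lambda\mapsto f(\lambda u)$, $\|u\|=1$, all of which pass through $0$.

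First one obtains a \emph{global} criterion. Suppose $|D_*f|/(1+|f|^2)\le M$ on $\{\|z\|<r\}$ for all $f\in\F$. Then every slice satisfies $|\lambda|\,\bigl(f(\lambda u)\bigr)^{\#}\le M$. By Marty's theorem the slice family is normal on $0<|\lambda|<r$, and your Step~1 argument (maximum principle for $h$ and $1/h$) extends this across $\lambda=0$. All slices share the value $f(0)$, so a subsequence with $f_t(0)\to a\in\C$ is uniformly bounded on $\{\|z\|\le r'\}$, and one with $f_t(0)\to\infty$ tends to $\infty$ uniformly there. Hence non-normality of $\F$ gives $\sup_{\F}\sup_{\|z\|<r_0}|D_*f|/(1+|f|^2)=\infty$ for some $r_0<1$. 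This says nothing about where the blow-up points lie.

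Then one runs the Chen--Gu maximization over the whole ball $\|z\|\le r$, with $r_0<r<1$:
\begin{itemize}
\item Maximize $\rho^{1+k}|D_*f_t(z)|/(\rho^{2k}+|f_t(z)|^2)$ evaluated at scale $\rho(1-\|z\|^2/r^2)$. This is increasing in $\rho$ precisely when $-1<k<1$.
\item The weight is radial, and $\|z_te^{\rho\zeta}\|=\|z_t\|e^{\rho\,\mathrm{Re}\,\zeta}$, so the weight is comparable along the rescaled points.
\item If $\rho_t\not\to 0$, the global criterion applied to $\{f_t/\sigma_0^{k}\}$, for a suitable constant $\sigma_0>0$, would make the sequence normal on $\|z\|<r_0$, a contradiction.
\item The maximizing points satisfy $z_t\neq 0$ because $D_*f(0)=0$. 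They may tend to $0$, which the statement allows.
\end{itemize}

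\textbf{The ``Moreover'' part.} Your sketch of this part is not an argument. The standard route: write a zero-free $f$ as $f=g^m$ on $\mathbb{B}$ with $m>|k|$. Apply the $(-1,1)$ case to the non-normal family $\{g\}$ with exponent $k/m$, and take $m$-th powers of the limit.
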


\section{Proofs of the main results}

\noindent
\begin{proof}[\bf{Proof of Theorem \ref{T1}}]
On contrary, assume that $f$ is not $\varphi$-normal in $\mathbb{B}$. Then, by Lemma \ref{A1}, the family $\F = \left\{h_t(w) = f\left(a_t + \frac{1}{\varphi(\|a_t\|)}w\right) ; ~ t\in \mathbb{N}\right\}$
is not normal at some $w_0 \in \mathbb{B}.$ In view of Lemma \ref{Z1}, there exists sequences $\{h_{t,i}\}_{i \in \mathbb{N}} \subset \mathcal{F}$ (for ease, we shall denote $\{h_{t,i}\}_{i \in \mathbb{N}}~\mbox{by}~\{h_{t}\} ), \{w_{t}\} \subset \mathbb{B}~\mbox{with}~w_t\longrightarrow w_0,~ \rho_t \subset (0,1)~\mbox{with}~ \rho_t \longrightarrow 0$ such that
\begin{equation}\label{E1}
H_t(w) = h_t(w_t + \rho_tw) =  f\left(a_t + \frac{1}{\varphi(\|a_t\|)}(w_t+\rho_tw)\right)\longrightarrow H(w)
\end{equation}
locally uniformly to a non-constant holomorphic function $H$ in $\Cn.$ Since, $H$ is a holomorphic function, one can easily compute that
\begin{equation}\label{E2}
D^IH_t(w) = \left(\frac{\rho_t}{\varphi(\|a_t\|)}\right)^{|I|}D^If\left(a_t + \frac{1}{\varphi(\|a_t\|)}(w_t+\rho_tw)\right) \longrightarrow D^IH(w)
\end{equation}
locally uniformly in $\Cn$, for all multi-indexes $I \in  \mathbb{N}^n.$
\vspace{1em}

\noindent
\textbf{Claim 1:} All zeros of $H$ are of multiplicity at least $k$.
\vspace{1em}

\noindent
Suppose $T$ be a compact set containing $w_0$ such that $H(w_0)=0$. Then, by Lemma \ref{H1}, there exists sequence $w_t^{'} \longrightarrow w_0 ~\mbox{such that}~ H(w_t^{'}) = 0,$ that implies
$$H_t(w_{t}^{'}) =f\left(a_t + \frac{1}{\varphi(\|a_t\|)}(w_t+\rho_tw_{t}^{'})\right)=0.$$
For convenience, set $\tilde{w_{t}} = a_t + \frac{1}{\varphi(\|a_t\|)}(w_t+\rho_tw_{t}^{'}).$ We find that for sufficiently large $t, \rho_t$ tends to $0$, and so $w_t^{'} \in \mathbb{B}.$ Therefore, in view of (\ref{2.1}) , there exists $C_1 > 0,$ such that
$$|D^If(\tilde{w_t})| \leq C_1,~~~~~~|I| = 1,2,\ldots,k-1. $$
Using the properties of $\varphi,$ we obtain that
\begin{equation}\label{E3}
    D^IH_t(w_{t}^{'}) = \left(\frac{\rho_t}{\varphi(\|a_t\|)}\right)^{|I|}D^If(\tilde{w_{t}}) \leq\left(\frac{\rho_t}{\varphi(0)}\right)^{|I|}D^If(\tilde{w_{t}}).
\end{equation}
Now, (\ref{E2}) and (\ref{E3}) together yield\\
$D^{I}H(w_0)= 0,~~~~~~|I|=1,2,\ldots,k-1,$
and $D^{J}H \not\equiv 0$.\\\\
Therefore, Claim 1 is established.
\vspace{1em}

\noindent
\textbf{Claim 2:} $D^{J}H(z) = 0$ whenever $H(z) \in E.$
\vspace{1em}

\noindent
Again, assume that $T_1$ is a compact subset of $\Cn$ containing $z_0$ such that $H(z_0) = b, ~\mbox{for some}~ b \in E$. Thus, by Lemma \ref{H1}, there exists sequence $\{z_t\}~\mbox{with}~z_t\longrightarrow z_0$ such that $H(z_t) = b,$ which gives
$$H_t(z_{t}) =f\left(a_t + \frac{1}{\varphi(\|a_t\|)}(w_t+\rho_tz_{t})\right)=b.$$
Again, for ease, set $ a_t + \frac{1}{\varphi(\|a_t\|)}(w_t+\rho_tz_{t}) = \tilde{z_t}$. Thus, in view of (\ref{2.2}), and for sufficiently large $t$,  there exists $C_2 > 0$, such that
\begin{equation}\label{E4}
\frac{1}{\varphi(\|\tilde{z}_t\|)^{k}}\frac{|D^{J}f(\tilde{z}_t)|}{1 +|f(\tilde z_t)|^{k+1}} \leq C_2.
\end{equation}
This implies,
$$\frac{|D^{J}H_t(z_t)|}{1 +|H_t(z_t)|^{k+1}}=\left(\frac{\rho_t}{\varphi(\|a_t\|)}\right)^{k}\frac{D^Jf(\tilde z_{t})}{1+|f(\tilde{z_{t}})|^{k+1}}\leq(\rho_t)^k\left(\frac{\varphi(\|\tilde z_t\|)}{\varphi(\|a_t\|)}\right)^{k}C_2$$
 as $t \longrightarrow\infty$. Again,  using (\ref{E2}) , we find that $\frac{|D^{J}H(z_0)|}{1 +|H(z_0)|^{k+1}} =0,$ and thus $D^{J}H(z_0)=0.$\\\\
This establishes Claim 2.
\vspace{1em}

\noindent
\textbf{Claim 3:} $D^{L}H(z) = 0$ whenever $H(z) \in E.$
\vspace{1em}

\noindent
Proceeding as in Claim $2$, and using (\ref{2.2}), we find that for sufficiently large $t$, there exists $C_3 > 0,$ such that
\begin{align}\nonumber\label{E5}
|D^{J}f(\tilde z_t)|& \leq \varphi(\|\tilde z_t\|)^k(1 +|f(\tilde z_t)|^{k+1}) C_3\\
&\leq \varphi(\|\tilde z_t\|)^k(1 +\max_{\beta \in E}|\beta|^{k+1})C_3.
\end{align}
 Again, by (\ref{2.3}), there exists $C_4 > 0$ such 
 that
 \begin{equation}\label{E6}
 \frac{|D^{L}f(\tilde z_t)|}{1 +|D^{J}f(\tilde z_t)|^{(k+1)/k}} \leq C_4.
 \end{equation}
 Set $C = \text{max}\{C_3, C_4\}.$\\\\
 Now, (\ref{E5}) and (\ref{E6}) yield
 \begin{align*}
 \frac{|D^{L}H_t(z_t)|}{1 +|D^{J}H_t(z_t)|^\frac{k+1}{k}} &= \left(\frac{\rho_t}{\varphi(\|a_t\|)}\right)^{k+1}\frac{|D^Lf(\tilde{z_{t}})|}{1+\left(\frac{\rho_t}{\varphi\|a_t\|}\right)^{(k+1)}|D^{J}f(\tilde{z_{t}})|^\frac{k+1}{k}}\\
 &\leq  \left(\frac{\rho_t}{\varphi(\|a_t\|)}\right)^{k+1}\frac{|D^Lf(\tilde{z_{t}})|}{1+|D^{J}f(\tilde z_t)|^\frac{k+1}{k}}\frac{1+|D^{J}f(\tilde z_t)|^\frac{k+1}{k}}{1+\left(\frac{\rho_t}{\varphi\|a_t\|}\right)^{(k+1)}|D^{J}f(\tilde{z_{t}})|^\frac{k+1}{k}}\\
 &\leq  C\left(\frac{\rho_t}{\varphi(\|a_t\|)}\right)^{k+1}\left(1 +[C(1+\max_{\beta \in E}|\beta|^{k+1})]^\frac{k+1}{k}\varphi(\|\tilde z_t\|)^{k+1}\right)\\
 &\leq  C \rho_t^{k+1}\left(1 +[C(1+\max_{\beta \in E}|\beta|^{k+1})]^\frac{k+1}{k}\left(\frac{\varphi(\|\tilde z_t\|)}{\varphi(\|a_t\|)}\right)^{k+1}\right).
 \end{align*}
In view of (\ref{E2}), we obtain that $\frac{|D^{L}H(z_0)|}{1 +|D^{J}H(z_0)|^{k+1}} \longrightarrow 0~ \text{as}~ t \longrightarrow \infty.$ Therefore, $D^{L}H(z_0)=0,$  and hence the Claim 3.
\vspace{1em}

\noindent
Now, for any $ w = (w_{1},~w_{2}~\ldots,~w_{n}) \in \Cn$, define,
$$g_w(\mu) = H(\mu w) = H (\mu w_{1},~\mu w_{2},~\ldots,~\mu w_{n}), ~\mu \in \C.$$
For brevity, we set $\mu w_i = \mu_i.$ Since, $H$ is non-constant holomorphic function in $\Cn,$ implies, $g_w$ is also a non-constant holomorphic function in $\C.$ Assume that $ H(\mu{'} w) = b,~\text{for some}~ b \in E ~\text{and}~ \mu'\in \C ,~ \text{then},~ g_w(\mu^{'}) \in E.$\\
Furthermore, for positive integers, $i_1,~ i_2,~\ldots,~ i_{k+1},$ we have
\begin{align*}
    g_{w}^{(1)}(\mu{'}) =& \sum\limits_{i_1=1}^{n}w_{i_1}\frac{\partial{H}}{\partial{\mu_{i_1}}}(\mu^{'}w),\\
    g_{w}^{(2)}(\mu{'}) =& \sum\limits_{i_1, i_2=1}^{n}w_{i_1}w_{i_2}\frac{\partial^{2}{H}}{\partial{\mu_{i_1}}{\partial\mu_{i_2}}}(\mu^{'}w).
    \end{align*}
    Proceeding in the same way as above, we find that
    \begin{align}\label{E7}
 g_{w}^{(k)}(\mu{'}) &= \sum\limits_{i_1, i_2,\ldots,i_k=1}^{n}w_{i_1}w_{i_2}\ldots w_{i_k}\frac{\partial^{k}{H}}{\partial{\mu_{i_1}}\partial{\mu_{i_2}}\ldots\partial{\mu_{i_k}}}(\mu^{'}w)\\
 g_{w}^{(k+1)}(\mu{'}) &= \sum\limits_{i_1, i_2,\ldots,i_{k+1}=1}^{n}w_{i_1}w_{i_2}\ldots w_{i_{k+1}}\frac{\partial^{k+1}{H}}{\partial{\mu_{i_1}}\partial{\mu_{i_2}}\ldots\partial{\mu_{i_{k+1}}}}(\mu^{'}w)\label{E8}
\end{align}
Now, Claim $2$ and Claim $3$ along with (\ref{E7}) and (\ref{E8}), yield
$$g_{w}^{(k)}(\mu{'}) =0 ~\text{and}~g_{w}^{(k+1)}(\mu{'}) = 0.$$
Now, by using Second fundamental theorem of Nevanlinna, we obtain that
\begin{align*}
    T(r, g_w) &\leq \overline{N}(r, g_w) + \sum\limits_{b_j \in E}\overline{N}\left(r,\frac{1} {(g_w-b_j)}\right) + S(r, g_w)\\
    &\leq \overline{N}\left(r,\frac{1} {g_w^{(k)}}\right)+ S(r, g_w)\\
    &\leq \frac{1}{2}N\left(r,\frac{1} {g_w^{(k)}}\right)+ S(r, g_w)\\
    &\leq  \frac{1}{2}T(r,g_w^{(k)})+ S(r, g_w)\\
    &\leq  \frac{1}{2}T(r,g_w)+ S(r, g_w).
\end{align*}
which is a contradiction.\\
Thus $f$ is $\varphi$-normal. 
\end{proof}

\begin{proof}[\bf{Proof of Theorem \ref{T2}}] 
Assume that $\F$ is not normal in $U$. Then, by Lemma \ref{L1}, there exist a real number $r \in (0, 1), ~\mbox{sequences}~ \{z_{t}\} \subset U : 0 <\|z_{j}\|  < r, \{ f_{t}\} \subset \mathcal{F},~\mbox{and}~ \{\rho_{t}\} \subset (0, 1)~;~  \rho_{t} \to 0 ~\mbox{as}~t \to \infty,~ \mbox{such that}~ $\\
$$h_{t}(\eta ) = f_{t}(z_{t}e^{\rho_{t}\eta}),~~~~~~~~~~~~~~~~~                 \eta \in \C$$\\
converges locally uniformly to a non-constant entire function $h(\eta)$ in $\C.$ Since, $h_t$ is a holomorphic function in $\C$, we can easily compute that 
\begin{equation}\label{4.1}
    h^{(k)}_{t}(\eta ) = \rho_t^{k}D_{*}^{k}f_{t}(z_{t}e^{\rho_{t}\eta}) \longrightarrow h^{(k)}(\eta)
    \end{equation}
    locally uniformly in $\C$.\\
Now, 
\begin{align}\nonumber\label{4}
M_{D_{*}}[f_t](z_{t}e^{\rho_{t}\eta}) &=\prod_{j=1}^{k}\left(D_{*}^{j}f_{t}(z_{t}e^{\rho_{t}\eta})\right)^{m_j}\\
 &= \prod_{j=1}^{k}{\left(\frac{h_{t}^{(j)}(\eta)}{\rho_{t}^{j}}\right)^{m_j}}.
\end{align}
On combining equations (\ref{4.1}) and (\ref{4}), it becomes obvious that
\begin{equation}\label{5}
\rho_{t}^{\sum\limits_{j=1}^{k}{jm_j}}M_{D_{*}}[f_t](z_{t}e^{\rho_{t}\eta}) = \prod_{j=1}^{k}{\left({h_{t}^{(j)}(\eta)}\right)^{m_j}} \longrightarrow \prod_{j=1}^{k}{\left({h^{(j)}(\eta)}\right)^{m_j}}    
\end{equation}
locally uniformly in $\C.$\\\\
\textbf{Claim:} $h(\eta) = c_i \implies \prod_{j=1}^{k}{\left({h^{(j)}(\eta)}\right)^{m_j}} =0,~~~~~i= 1,~ 2,~\ldots~\bar{d} +2$.\\\\
Let $h(\eta_0) = c_1.$ Then by Hurwitz theorem, there exists sequence $\{\eta_t\} \subset \C ~\mbox{with}~ \eta_t \longrightarrow \eta_0 ~\mbox{as}~ t \longrightarrow \infty~\mbox{ such that}~h_t(\eta_t) = f_{t}(z_{t}e^{\rho_{t}\eta_t}) = c_1$, it implies that $|M_{D_{*}}[f](z_{t}e^{\rho_{t}\eta_t})| \leq C.$ Using the assumptions of the theorem and the equation (\ref{5}), we obtain
\begin{equation*}
 \left|\prod_{j=1}^{k}{\left({h^{(j)}(\eta_0)}\right)^{m_j}}  \right|=\lim_{t\to\infty}\left|\rho_{t}^{\sum\limits_{j=1}^{k}{jm_j}}M_{D_{*}}[f_t](z_{t}e^{\rho_{t}\eta_t})\right|\leq \lim_{t\to\infty}\rho_{t}^{\sum\limits_{j=1}^{k}{jm_j}}C =0,   
\end{equation*}
which gives $\prod_{j=1}^{k}{\left({h^{(j)}(\eta_0)}\right)^{m_j}} = 0.$ By applying similar reasoning for $c_2, c_3,\ldots, c_{\bar{d}+2},$ we establish our claim.\\\\
In view of above established claim and the Second Fundamental Theorem of Nevanlinna, we find that
\begin{align*}
(\bar{d}+1)T(r, h) &\leq  \overline{N}(r, h) +\sum_{i=1}^{\bar{d}+2}\overline{N}\left(r,\frac{1}{h-c_i}\right) + S(r,h)\\
&\leq N\left(r,\frac{1}{\prod\limits_{j=1}^{k}(h^{(j)})^{m_j}}\right) + S(r, h)\\
&\leq T\left(r,\frac{1}{\prod\limits_{j=1}^{k}(h^{(j)})^{m_j}}\right) + S(r, h)\\
&\leq\sum\limits_{j=1}^{k}m_j T\left(r,\frac{1}{h^{(j)}}\right) + S(r, h)\\
&\leq\sum\limits_{j=1}^{k}m_jT(r,h^{(j)}) + S(r, h)\\
&\leq\sum\limits_{j=1}^{k}m_jT(r,h) + S(r, h)\\
&= \bar{d}~T(r,h) + S(r, h).\\
\end{align*}
Thus, $T(r, h) = S(r,h)$, which is a contradiction.\\ Hence $\F$ is normal in $U.$  
\end{proof}
\noindent

\begin{proof}[\bf{Proof of Theorem \ref{T3}}] Assume that $\F$ is not normal in $U$. Then, by Lemma \ref{L1}, there exist a real number $r \in (0, 1), ~\mbox{sequences}~ \{z_{t}\} \subset U : 0 <\|z_{j}\|  < r, \{ f_{t}\} \subset \mathcal{F},~\mbox{and}~ \{\rho_{t}\} \subset (0, 1)~;~  \rho_{t} \longrightarrow 0 ~\mbox{as}~t \longrightarrow \infty,~ \mbox{such that}~\mbox{for}~ \beta >l$,
$$h_{t}(\eta ) = \frac{f_{t}(z_{t}e^{\rho_{t}\eta})}{\rho_{t}^{\beta}},~\eta \in \C$$\\
converges locally uniformly to a non-constant entire function $h(\eta)$ in $\C.$ Since, $h_t$ is a holomorphic function in $\C$, we can easily compute that 
\begin{equation}\label{3}
    h^{(l)}_{t}(\eta ) = \rho_t^{l-\beta}D_{*}^{l}f_{t}(z_{t}e^{\rho_{t}\eta}) \longrightarrow h^{(l)}(\eta)
    \end{equation}
    locally uniformly in $\C$.\\

We may assume that the sequence $\{z_t\} \longrightarrow z_0$, for sufficiently large $t$.
    Now, for each $ i= 1,~ 2,~ \ldots,~ l-1,$ and $\eta_0 \in \C$, we can see that
    \begin{equation}\nonumber
        \alpha_i(z_{t}e^{\rho_{t}\eta_0})h_{t}^{(i)}(\eta_0) \longrightarrow\alpha_i(z_{0})h^{(i)}(\eta_0),~\mbox{as}~t \longrightarrow \infty.
\end{equation}
Therefore,
\begin{equation}\label{8}
     \left|h_{t}^{(l)}(\eta_0) +\sum_{i=1}^{l-1}\rho_{t}^{l-i}\alpha_i(z_{t}e^{\rho_{t}\eta_0})h_{t}^{(i)}(\eta_0)\right| \longrightarrow |h^{(l)}(\eta_0)|< M,
\end{equation}
for sufficiently large $t \in \mathbb{N}$, and some positive real $M$.\\
Now, \eqref{3} and (\ref{2.3.1}) together yield
 \begin{align}\nonumber\label{9}
     &\left|h_{t}^{(l)}(\eta_0) +\sum_{i=1}^{l-1}\rho_{t}^{l-i}\alpha_i(z_{t}e^{\rho_{t}\eta_0})h_{t}^{(i)}(\eta_0)\right|\\
     =&\left|\rho_{t}^{l-\beta}D_{*}^{l}f_{t}(z_te^{\rho_t\eta_0}) +\sum_{i=1}^{l-1}\rho_{t}^{l-\beta}\alpha_i(z_{t}e^{\rho_{t}\eta_0})D_{*}^{i}f_{t}(z_te^{\rho_t\eta_0})\right|> \rho_{t}^{l-\beta}C\longrightarrow \infty,~~~t \longrightarrow \infty.
 \end{align}
which is a contradiction to (\ref{8}).\\
Hence $\F$ is normal in $U.$  
\end{proof}
\vspace{3em}

\noindent
\textbf{Acknowledgements:} The author is thankful to her supervisor Prof. K. S. Charak for his valuable remarks and constructing criticism during the preparation of this paper. Also, the author is grateful to the Department of Science and Technology, Ministry of Science and Technology, Government Of India for the financial support through DST-INSPIRE Fellowship (No. DST/INSPIRE/03/2022/005759, IF 220689).

\medskip

\noindent{\bf Data Availability:} Data sharing is not applicable since the article is purely theoretical in nature.


\begin{thebibliography}{99}
\bibitem{AL} A. Aleman and A. Constantin, \emph{Harmonic maps and ideal fluid flows}, Arch. Ration. Mech. Anal., \textbf{204} (2012), 479-513.
\bibitem{ARB} H. Arbel\'aez, R. Hern\'andez and W. Sierra, \emph{Normal harmonic mappings}, Monatsh. Math., \textbf{190} (2019), 425-439.
\bibitem{AUL} R. Aulaskari and J. R\"atty\"a, \emph{Properties of meromorphic $\varphi$- normal functions}, Michigan Math. J., \textbf{60} (2011), 93-11.
\bibitem{NIK-1} N. Bharti and K. S. Charak, \emph{Value distribution of certain differential polynomials and a counterexample to the converse of Bloch's principle}, Bull. Korean Math. Soc., \textbf{62}(1) (2025), 275-297.
\bibitem{NIK} N. Bharti and N. V. Thin, \emph{Results on normal harmonic and $\varphi$-normal harmonic mappings}, Monatsh. Math., \textbf{207} (2025), 385-403.
\bibitem{BOH} N. Bohra, G. Datt and R. Pal, \emph{Lappan’s five-point theorem for $\varphi$-normal harmonic mappings}, Monatsh. Math., \textbf{206}(4) (2025), 797-808.
\bibitem{LIU} T. B. Cao and Z. X. Liu, \emph{Normality criteria for a family of holomorphic functions
concerning the total derivative in several complex variables}, J. Korean Math. Soc., \textbf{53} (2016), 1391-1409.
\bibitem{RAH1} K. S. Charak and R. Kumar, \emph{Lappan's normality criteria in $\Cn$}, Rend. Circ. Mat. Palermo. (2), \textbf{72} (2023), 239-252. 
\bibitem{CHA} K. S. Charak, N. Bharti and A. Singh, {\em Normality through partial sharing of sets with differential polynomials}, Filomat, \textbf{39}(24) (2025), 8273-8288.
\bibitem{DENG} H. Deng, S. Ponnusamy and J. Qiao, \emph{Properties of normal harmonic mappings}, Monatsh. Math., \textbf{193} (2020), 605-621.
\bibitem{DOV} P. V. Dovbush, \emph{Zalcman's lemma in $\Cn$}, Complex Var. Elliptic Equ., \textbf{65} (2020), 796-800 .

\bibitem{HAY} W. K. Hayman, Meromorphic Functions, Clarendon Press, Oxford, (1964).
\bibitem{RAH3} R. Kumar, \emph{Zalcman Pang lemma concerning the total derivatives in several complex variables}, J. Anal., \textbf{33} (2025), 2547-2557.  
\bibitem{LEB} J. Lebl, \emph{Tasty Bits Of Several Complex Variables}, https://www.jirka.org/scv/, (2023).
\bibitem{LEH} O. Lehto and K. I. Virtanen, \emph{Boundary behaviour and normal meromorphic functions}, Acta Math., \textbf{97} (1957), 47-65 .
\bibitem{CAO} Z. Liu and T. Cao, \emph{Zalcman's lemma and normality concerning shared values of holomorphic functions and their total derivatives in several complex variables}, Rocky Mt. J. Math., \textbf{49}(8) (2019), 2689-2716 .
\bibitem{JIN} J. Lu, \emph{Theorems of Picard type for entire functions of several complex variables}, Kodai Math. J., \textbf{26} (2003), 221-229.
\bibitem{GRA} T. Manket and S. Nevo, \emph{On row differential inequalities related to normality and quasi-normality}, Compt. Method Funct. Theory, \textbf{24} (2024), 479-511. 
\bibitem{MON} P. Montel, \emph{Sur les families de fonctions analytiques qui admettent des valeurs exceptionnelles dans un domaine}, Ann. \'Ecole. Norm. Sup., \textbf{29}(3) (1912), 487-535.
\bibitem{NOS} K. Noshiro, \emph{Contributions to the theory of meromorphic functions in the unit circle}, J. Fac. Sci. Hokkaido Univ., \textbf{7} (1938), 149-159.
\bibitem{PAN} S. Pandita and N. Bharti, {\em Bicomplex analogues of some normality criteria}, J. Anal., \textbf{32} (2024), 3515-3531.
\bibitem{SCH} J. L. Schiff, \emph{Normal Families}, Springer, Berlin, (1993).
\bibitem{TAN} T. V. Tan and N. V. Thin, \emph{On Lappan’s five-point theorem}, Comput. Methods Funct. Theory, \textbf{17} (2017), 47-63 .
\bibitem{YOS} K. Yosida, \emph{On a class of meromorphic functions}, Proc. Phys. Math. Soc. Japan, (1934), 227-235 .
\bibitem{JCA} T. Zhu, S. Zhou and L. Yang, \emph{On normal functions in several complex variables}, J. Classical Anal., \textbf{16}(1) (2020), 45-58.
\end{thebibliography}
\end{document}